\theoremstyle{theorem}
\newtheorem{lemma}{Lemma}[section]
\newtheorem{theorem}{Theorem}[section]
\newtheorem*{fact}{Theorem I}{\bf}{\it}
{\bf}{\it}
\newtheorem{proposition}{Proposition}[section]
\numberwithin{equation}{section} \theoremstyle{remark}
\newtheorem*{remark}{Remark}
\newtheorem*{claim}{Claim}
\newcommand{\s}{{\mathbf S}}
\newcommand{\C}{{\mathbf C}}
\newcommand{\N}{{\mathbf N}}
\newcommand{\R}{{\mathbf R}}
\newcommand{\di}{{\rm div}}
\newcommand{\supp}{{\rm supp}}
\newcommand{\dist}{{\rm dist}}
\newcommand{\beq}{\begin{equation}}
\newcommand{\eeq}{\end{equation}}
\newcommand{\beqs}{\begin{equation*}}
\newcommand{\eeqs}{\end{equation*}}
\newcommand{\beg}{\begin{gather}}
\newcommand{\eeg}{\end{gather}}
\begin{document}

\title[Quantitative uniqueness]{Quantitative uniqueness for elliptic
equations with singular lower order terms}


\author{Eugenia Malinnikova}
\address{Department of Mathematics,
Norwegian University of Science and Technology,
7491, Trondheim, Norway}
\email{eugenia@math.ntnu.no}
\author{Sergio Vessella}
\address{Dipartimento di Matematica per le Decisioni, Universit\`{a}
degli Studi, 50134 Firenze, Italy}
\email{sergio.vessella@dmd.unifi.it}

\begin{abstract}
We use a Carleman type inequality of Koch and Tataru to obtain
quantitative estimates of unique continuation for solutions of
second order elliptic equations with singular lower order terms.
First we prove a three sphere inequality and then describe two
methods of propagation of smallness from sets of positive measure.
\end{abstract}
\subjclass[2000]{35J15, 35B35}
\keywords{unique continuation, second order
elliptic equations, quantitative propagation of smallness}

\maketitle

\section{Introduction}

In this work we deal with second-order uniformly elliptic
equations in a  bound\-ed domain $\Omega\subset\R^n$, $n\ge 3$. We
assume that the equation is in divergence form and terms of order
one and zero may have singularities. The conditions we impose on
the lower order terms imply the strong unique continuation
property, we refer the reader to the article of H.Koch and
D.Tataru, \cite{KT}, and references therein for the history of the
Carleman inequalities and strong unique continuation for
second-order elliptic equations.

\subsection{Problem of quantitative propagation of smallness}
Assume that a solution to a second-order uniformly elliptic
equation is bounded on the domain and is small on a subset of
positive measure, our aim is to estimate such a solution on an
arbitrary compact subset of the domain.  We refer to estimates of
this nature as quantitative propagation of smallness. Three sphere
inequalities for elliptic equations provide classical examples of
quantitative propagation of smallness; various versions of the
inequality can be found, for example, in \cite{La,G,A,B,K,ARRV}.
Our first result is a version of the three sphere inequality for
equations with singular coefficients in lower order terms, it is
derived from the inequality of H.Koch and D.Tataru.

We refer the reader to articles of N.Nadirashvili and S.Vessella,
\cite{N} and \cite{V}, in which the problem of propagation of
smallness for second-order elliptic equations from a set $E$ of
positive measure was considered. We mention also that similar
problems for the case of elliptic equations with analytic
coefficient were discussed in \cite{N1,V1,M}, methods used in these
works are of complex analytic nature.

In the second part of the article we give
two approaches to propagation of smallness for the case of singular
coefficients, both of them
use the three sphere inequality obtained in the first part of the
work. The first is an improvement of the one in \cite{V}. It uses
Carleman type inequality and gives estimates of $L^2$-norms, all
constants can be estimated explicitly. The second approach repeats
a clever argument of N.Nadirashvili, \cite{N}, we assume a
slightly better integrability of the lower order terms than for
the first approach, and estimate $L^\infty$-norms, the constants
are not explicit here but the asymptotic of the decay of solution
is better. The precise formulations of the results are given in
the next section.

\subsection{Formulation of the result}
We consider the equation 
\beq 
\label{eq:11} 
Pu=Vu+W_1\cdot\nabla
u+\nabla\cdot(W_2u), 
\eeq 
where $P=\di(g\nabla u)$,
$g(x)=\{g^{ij}(x)\}_{i,j=1}^{n}$ is a real-valued symmetric matrix
such that it satisfies, for a given constant $\lambda\in(0,1]$,
the uniform ellipticity condition in $\Omega$, 
\beq 
\label{eq:00}
\lambda|\zeta|^2\le
g(x)\zeta\cdot\zeta\le\lambda^{-1}|\zeta|^2,\quad x\in\Omega,
\zeta\in\R^n. 
\eeq 
We also assume that, for a given constant
$\Lambda_0>0$, the following Lipschitz condition holds 
\beq
\label{eq:02} 
|g(x)-g(y)|\le\Lambda_0|x-y|,\quad x,y\in\Omega.
\eeq 
Finally, the lower order terms are assumed to satisfy the
following integrability conditions: 
\beq 
\label{eq:01} V\in
L^{n/2}(\Omega)\quad {\rm{and}}\quad  W_1,W_2\in L^s(\Omega)\
{\rm{with}}\ s>n, 
\eeq 
here $W_1, W_2:\Omega\rightarrow \R^n$ and
$V:\Omega\rightarrow\R$.



The main aim of the work is to obtain quantitative propagation of
smallness from sets of positive measure for solutions of (\ref{eq:11}).
The problem setting is the following:

\textsl{Let $E,K$ be compact subsets of $\Omega$ and let $E$ have
positive measure. Find a function $\phi(\epsilon)$,
$\lim_{\epsilon\rightarrow 0}\phi(\epsilon)=0$, such that any
solution $u$ of $(\ref{eq:11})$ that satisfies 
\beq 
\label{eq:53}
\|u\|_{L^{2}(\Omega)}\le 1,\quad \|u\|_{L^2(E)}\le
\epsilon\ 
\eeq 
is bounded in $L^2(K)$ by
\[
\|u\|_{L^2(K)}\le\phi(\epsilon).\] } The existence of such a
function $\phi$ can be proved in the following way. Assume that
there is a sequence $\{u_j\}$ of solutions to (\ref{eq:11}) such
that $\|u_j\|_{L_2(\Omega)}\le |\Omega|^{1/2}$,
$\|u_j\|_{L^2(E)}\le \epsilon_j$, where $\epsilon_j\rightarrow 0$
and $\|u_j\|_{L^2(K)}\ge  c>0$ for each $j$. Applying the
Caccioppoli inequality (see below), we obtain that $\{u_j\}$ is
bounded in $W_2^1(\Omega')$, where $\Omega'\subset\subset\Omega$.
By choosing a  subsequence $\{u_{j_l}\}$, we find a solution $u$
to (\ref{eq:11}) in $\Omega'$ such that $\{u_{j_l}\}$ weakly
converges to $u$ in $W_2^1(\Omega')$ and in $L^2(\Omega)$. Then
$u=0$ on $E$ while $\|u\|_{L^2(K)}\ge c>0$. According to a result
of R.Regbaoui \cite{R}, if $u$ vanishes on a set of positive
measure then $u$, in particular, has a zero of infinite order at
some point and by the strong unique continuation property proved
by Koch and Tataru \cite{KT}, $u\equiv 0$.

In this work we describe constructive schemes that provide
quantitative estimates of $\phi$. We remark also
that in our schemes $\phi$ does not depend on $E$ but only on $K$,
the measure of $E$, and the distance from $E$ to the boundary of
$\Omega$.

Let  $\Omega(\rho)=\{x\in\Omega:\dist\{x,\partial\Omega)>4\rho\}$
for each $\rho>0$. Since in what follows we shall assume that
$\Omega$ is a bounded connected open set with Lipschitz boundary,
we may consider only $\rho<\rho^*$ such that $\Omega(\rho)$ is
also connected. Our main results are the following:

\begin{theorem}
\label{th:m1} Let $\Omega$ be a bounded domain with Lipschitz
boundary, $u\in W^1_2(\Omega)$ be a solution of (\ref{eq:11}), and
the coefficients of the equation satisfy
(\ref{eq:00}-\ref{eq:01}). Further, let $\rho<\rho^*$ and let $E$
be a measurable subset of $\Omega(\rho)$ of positive measure such
that (\ref{eq:53}) holds. Then \beq \label{eq:50}
\|u\|_{L^2(\Omega(\rho))}\le C|\log\epsilon|^{-c} , \eeq where $C$
and $c$ depend on $\Omega$, $\lambda$, $\Lambda_0$, $V$, $W_1$,
$W_2$, $|E|$, and $\rho$ only.
\end{theorem}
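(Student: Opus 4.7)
My plan is to split the argument into two stages: first, to transfer the hypothesis of smallness on $E$ to quantitative $L^2$-smallness on a full ball $B\subset\Omega(\rho/2)$; second, to propagate this ball-smallness to all of $\Omega(\rho)$ by iterating the three sphere inequality obtained in the first part of the paper along a chain of overlapping balls. The logarithmic decay rate $|\log\epsilon|^{-c}$ will arise in the first stage; the second stage will only degrade the exponent.

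For the localization stage, a quantitative form of Lebesgue's density theorem, applied to the measurable set $E\subset\Omega(\rho)$ of positive measure, supplies a point $x_0$ and a radius $r_1>0$, depending only on $|E|$ and $\rho$, such that $B_{4r_1}(x_0)\subset\Omega$ and $|E\cap B_{r_1}(x_0)|\ge\beta|B_{r_1}(x_0)|$ for some absolute fraction $\beta$. From the trivial decomposition
\[
\int_{B_{r}(x_0)}u^2\ \le\ \epsilon^2+\int_{B_{r}(x_0)\setminus E}u^2
\]
one sees that it suffices to dominate the second integral by a fixed multiple of $\int_{B_{r}(x_0)}u^2$ with coefficient strictly less than one (up to a small error in $\epsilon$). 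Following the scheme of Nadirashvili and Vessella, I will set this up as an iteration on the concentric balls $B_{r_1/2^{k}}(x_0)$: at each step one applies the three sphere inequality to transfer smallness between neighbouring balls, while a Meyers-type higher integrability estimate (via Caccioppoli) together with H\"older's inequality on $B_{r_1/2^k}\setminus E$ controls the bad portion by a factor involving $|B_{r_1/2^k}\setminus E|^{1-2/p}$ for some $p>2$. Optimizing the number of iterations against the geometric loss per step converts the power-type gain into the logarithmic bound $\|u\|_{L^2(B_{r_1/2}(x_0))}\le C|\log\epsilon|^{-c_1}$.

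For the propagation stage, I cover $\Omega(\rho)$ by a finite chain of balls $B_{r_2}(y_0),\dots,B_{r_2}(y_N)$ with $y_0\in B_{r_1/2}(x_0)$, $B_{4r_2}(y_i)\subset\Omega$, and consecutive balls suitably overlapping. The integer $N$ and the radius $r_2$ depend only on $\Omega$, $\rho$, and $r_1$. Applying the three sphere inequality at each link and iterating yields
\[
\|u\|_{L^2(\Omega(\rho))}\le C\|u\|_{L^2(B_{r_1/2}(x_0))}^{\theta^N}\|u\|_{L^2(\Omega)}^{1-\theta^N}
\]
for some fixed $\theta\in(0,1)$. Substituting the bound from the first stage and using $\|u\|_{L^2(\Omega)}\le 1$ then produces (\ref{eq:50}) with the strictly positive exponent $c=c_1\theta^N$.

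I expect the localization stage to be the main obstacle, specifically the verification that the iterative argument does not deteriorate under the singular assumptions on $V,W_1,W_2$. The three sphere inequality announced for the first part of the paper has constants depending only on the integral norms $\|V\|_{L^{n/2}(\Omega)}$ and $\|W_i\|_{L^s(\Omega)}$; the delicate technical point is to track how these norms, restricted to balls of shrinking radius around a density point of $E$, behave uniformly across the iteration, as this is what will dictate the effective value of $c_1$ and ultimately of $c$. The propagation chain in the second stage is, by contrast, essentially routine once the three sphere inequality is in hand.
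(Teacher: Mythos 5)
Your second stage (covering $\Omega(\rho)$ by a chain of balls and iterating the three sphere inequality from one ball on which $u$ is known to be small) is exactly what the paper does in the proof of Theorem \ref{th:m1}, and is unproblematic. The gap is in your first stage, and it is precisely the difficulty that the whole of Section 4 of the paper is built to overcome. Your absorption step needs a reverse H\"older inequality \emph{on a single ball}, i.e.\ $\|u\|_{L^p(B_r)}\le Cr^{-1}\|u\|_{L^2(B_r)}$ with $p=\frac{2n}{n-2}$. Caccioppoli plus Sobolev only give $\|u\|_{L^p(B_r)}\le Cr^{-1}\|u\|_{L^2(B_{2r})}$; closing the gap from $B_{2r}$ back to $B_r$ requires the doubling inequality of Proposition \ref{pr:1}, whose constant $C(u,\Omega,\rho)$ in \eqref{eq:19} depends on the solution $u$ itself and is not a priori bounded. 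Consequently the fraction of $B$ that $E$ must occupy for the absorption \eqref{eq:28} to work is $\alpha(u)\sim C(u,\Omega,\rho)^{-n}$, again $u$-dependent, so a single application of the Lebesgue density theorem yielding an \emph{absolute} fraction $\beta$ of $B_{r_1}(x_0)$ is not enough, and your "fixed multiple strictly less than one" and "optimizing the number of iterations against the geometric loss per step" cannot be justified: neither the per-step loss nor the admissible number of steps is a fixed constant.

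The paper resolves this with two devices you would need to supply. First, the Nadirashvili dyadic scheme (Proposition \ref{pr:4} and Lemma \ref{l:squares}): one selects maximal dyadic cubes filled by $E$ to the $u$-dependent threshold $\beta=1-kC(u,\Omega,\rho_1)^{-n}$ of \eqref{eq:43}, and propagates $\delta$-goodness upward through $N$ generations of dyadic parents, with $N\sim N_0C(u,\Omega,\rho_1)^n$ and a multiplicative loss $D\sim C(u,\Omega,\rho_1)^{\gamma_n}$ per generation. Second, and crucially, the dichotomy of Proposition \ref{pr:2}: if $C(u,\Omega,\rho_1)\ge C_0|\log\epsilon|^{\sigma}$, then by the very definition \eqref{eq:19} there is already a ball of radius $2\kappa\rho_1$ on which $\int|u|^2\le(C_0C^{-1})^{2/H_1}\le|\log\epsilon|^{-2\sigma/H_1}$, and no iteration is needed; otherwise $N\lesssim|\log\epsilon|^{n\sigma}$ with $n\sigma<1$, so the accumulated loss $D^{N}$ is $\exp(o(|\log\epsilon|))$ and is beaten by the factor $\epsilon^2$ from \eqref{eq:44}. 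This dichotomy is the source of the logarithmic rate and is entirely absent from your outline. By contrast, the point you flag as delicate --- uniformity of $\|V\|_{L^{n/2}}$ and $\|W_i\|_{L^s}$ on shrinking balls --- is the easy part: it is handled once and for all in the localization of Section 3.2 by absolute continuity of the integral, which fixes $\rho_0$ so that \eqref{eq:13}--\eqref{eq:14} hold.
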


\begin{theorem}
\label{th:m2} Let $\Omega$ be a bounded domain with Lipschitz
boundary, $u\in W^1_2(\Omega)$ be a solution of (\ref{eq:11}),
where $P$ satisfies (\ref{eq:00}-\ref{eq:02}) and \beq
\label{eq:01n} V\in L^{s/2}(\Omega)\quad {\text{and}}\quad
W_1,W_2\in L^s(\Omega)\ {\text{with}}\ s>n, \eeq
 and let $E$ be a measurable subset of $\Omega(\rho)$, $\rho<\rho^*$, of positive measure such that
 \beq
\label{eq:53n} \|u\|_{L^{\infty}(\Omega)}\le 1,\quad
\|u\|_{L^\infty(E)}\le \epsilon\ \eeq holds. Then \beq
\label{eq:50n} \|u\|_{L^\infty(\Omega(\rho))}\le
C\exp(-c(|\log\epsilon|)^\mu), \eeq where $c,C$ and $\mu$ depend
on $\Omega$, $\lambda$, $\Lambda_0$, $V$, $W_1$, $W_2$, $|E|$, and
 $\rho$ only.
\end{theorem}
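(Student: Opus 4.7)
The plan is to follow N.~Nadirashvili's scheme from \cite{N}: replace the $L^2$ three sphere inequality of the previous section with an $L^\infty$ version, use a Lebesgue density point of $E$ to produce an initial ball on which $u$ is small in $L^\infty$, and then propagate by a chain of balls and iterated three sphere inequalities.

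First I would prove the $L^\infty$ three sphere inequality. The strengthened integrability \eqref{eq:01n} (the exponent $s/2>n/2$ is what matters) places the equation within the De~Giorgi--Nash--Moser framework, so Moser iteration yields the local sup bound $\|u\|_{L^\infty(B_{r/2}(x))}\le C\|u\|_{L^2(B_r(x))}$ with $C$ depending on $\lambda$, $\Lambda_0$, $\|V\|_{L^{s/2}}$, $\|W_i\|_{L^s}$, and $r$. Combining this with the $L^2$ three sphere inequality of the preceding section, applied on slightly enlarged concentric balls, produces an $L^\infty$ three sphere inequality
\[
\|u\|_{L^\infty(B_{r_2})}\le C\|u\|_{L^\infty(B_{r_1})}^\alpha\|u\|_{L^\infty(B_{r_3})}^{1-\alpha}
\]
for concentric $B_{r_1}\subset B_{r_2}\subset B_{r_3}\subset\subset\Omega$ and some $\alpha\in(0,1)$ depending only on the ratios of the radii.

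Next I would exploit a Lebesgue density point. Let $x_0\in E$ be such a point; Lebesgue's theorem supplies $r(\eta)>0$ with $|E\cap B_r(x_0)|\ge(1-\eta)|B_r|$ for all $r\le r(\eta)$. Splitting $B_r=(E\cap B_r)\cup(B_r\setminus E)$ and invoking \eqref{eq:53n} gives $\|u\|_{L^2(B_r)}^2\le(\epsilon^2+\eta)|B_r|$, so choosing $\eta=\epsilon^2$ and $r=r(\epsilon^2)$ and applying the Moser bound produces $\|u\|_{L^\infty(B_{r/2}(x_0))}\le C\epsilon$. Starting from this ball I would then cover a Harnack chain from $x_0$ to an arbitrary $y\in\Omega(\rho)$ by overlapping balls of radius $\sim r$ and iterate the $L^\infty$ three sphere inequality; with $N\sim\diam(\Omega)/r$ links the outcome is $\|u\|_{L^\infty(\Omega(\rho))}\le C\epsilon^{\alpha^N}$.

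The principal obstacle, and the source of the exponent $\mu$ in \eqref{eq:50n}, is controlling the dependence of $r(\eta)$ on $\eta$: Lebesgue's theorem alone gives no quantitative lower bound on $r(\eta)$ in terms of $|E|$. Nadirashvili's device is to select $x_0$ from a subset $E'\subset E$ of positive measure on which the convergence $|E\cap B_r(x)|/|B_r(x)|\to 1$ is uniform in $x\in E'$, as furnished by Egorov's theorem applied to this a.e.\ pointwise convergence, so that a single modulus depending only on $|E|$ governs the entire argument. Inserting the resulting bound $N\sim 1/r(\epsilon^2)$ into $\epsilon^{\alpha^N}=\exp(-\alpha^N|\log\epsilon|)$ and balancing the two factors then yields the final form $\exp(-c|\log\epsilon|^\mu)$ for some $\mu\in(0,1)$, with the value of $\mu$ dictated by the decay rate of the density modulus.
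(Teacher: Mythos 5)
Your outline matches the general Nadirashvili framework, and you correctly identify the crux: one needs a \emph{quantitative} lower bound on the radius of the initial ball on which $u$ is small. But your proposed resolution of that crux does not work. Egorov's theorem only asserts the existence of a subset $E'$ on which the density convergence $|E\cap B_r(x)|/|B_r(x)|\to 1$ is uniform; it gives no rate whatsoever for the modulus $r(\eta)$, and the modulus it does give depends on the set $E$ itself, not merely on $|E|$. Since you set $\eta=\epsilon^2$, your radius $r(\epsilon^2)$ may tend to zero arbitrarily fast as $\epsilon\to 0$; then $N\sim 1/r(\epsilon^2)$ is uncontrolled, $\alpha^{N}|\log\epsilon|$ need not tend to infinity (it can tend to $0$), and the ``balancing'' you invoke in the last step cannot be carried out. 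The exponent $\mu$ in \eqref{eq:50n} is precisely the quantity your argument fails to produce.

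The paper's mechanism is genuinely different at this point. It never requires the relative density of $E$ in a ball to reach $1-\epsilon^2$; instead it works with a \emph{fixed} density threshold $\gamma<1$ determined by the elliptic sup-estimate, and derives from it a growth lemma: if $|u(y^*)|>c>2A\epsilon$ and a nearby ball meets $F_0$ in proportion $\gamma$, then $\sup|u|$ over that ball exceeds $(1+\beta)c$. The quantitative input replacing your density modulus is the Marcinkiewicz integral $\int_{|y|\le1}\dist(x_0+y,F_1)|y|^{-n-1}\,dy<\infty$, valid at a.e.\ point $x_0$ of a density subset $F_1$; its finiteness controls, through a dyadic count of ``good'' scales $l$ with $h_l<l^{-1/(n+1)}2^{-l}$, how many times $m(r)=\max_{|x-x_0|=r}|u|$ must be multiplied by $(1+\beta)$ before $r$ reaches a fixed size. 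Comparing this count with the trivial bound $m\le1$ yields $r_0=r(2A\epsilon)\ge\exp(-B|\log\epsilon|^{(n+1)/(n+2)})$, and a single three sphere step (not a length-$1/r$ chain, which would destroy the estimate) then gives $\exp(-c|\log\epsilon|^{1/(n+2)})$ on a ball of fixed radius. The residual dependence on the particular set (through $x_0$ and the Marcinkiewicz constant) is removed not by Egorov but by two reductions: a dyadic-cube decomposition (Lemma \ref{l:squares} together with Proposition \ref{pr:4}-type covering) reducing to sets that almost fill a ball, and a compactness/contradiction argument reducing to a single fixed set $F_0$. To repair your proof you would need to import this entire growth-lemma-plus-Marcinkiewicz machinery; the density-point shortcut cannot be made quantitative.
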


\subsection{The structure of the article}
Preliminary results are collected in the next section, we
formulate a version of the Carleman inequality due to Koch and
Tataru that implies strong unique continuation for equations we
consider; we also prove the Caccioppoli inequality for solutions
of this equations. In Section 3 we  obtain the doubling property
for solutions of our equations and prove a three sphere
inequality. The proof of Theorem \ref{th:m1} appears at the end of
Section 4. First in this section we show that the Caccioppoli
inequality for solution $u$ and the doubling property yield
Muckenhoupt condition for the weight $|u|^2$, then we apply the
three sphere inequality.  Finally, in Section 5 we reproduce (a
slightly modified) argument of Nadirashvili that, in combination
with the three sphere inequality for the class of equations we
consider, gives a proof of Theorem \ref{th:m2}.

\section{Preliminaries}
In this section we introduce the notation and formulate the results needed in the sequel.

\subsection{Notation and an inequality of Koch and Tataru}
We work with standard functional spaces $L^s(\Omega)$ and
$W^{l}_m(\Omega)$ and always assume that $\Omega$ is a bounded
domain; our results reflect local properties of functions inside
the domain, so without loss of generality we consider only domains
with Lipschitz boundary. Solutions of (\ref{eq:11}) are defined as
weak solutions in $W_2^1(\Omega)$, standard definitions and
notation that we use can be found in \cite{LU}.

Our basic tool is the following version of Carleman inequality.
\begin{fact}
 Assume that coefficients of (\ref{eq:11}) satisfy (\ref{eq:00}-\ref{eq:01})
Then there exists $H_0$, $r$, and $\tau_0$ such that for every
$\tau>\tau_0$ and each function $v$ vanishing at $x_0\in\Omega$,
$\dist(x_0,\bar\Omega)>2r$, and with $\supp v\subset B(x_0,r)$
that solves
\[
Pv-Vv-W_1\cdot\nabla v-\nabla\cdot(W_2v)=f\] there exists $\phi$
such that \beq \label{eq:12} \tau\le -r\partial_r\phi\le
H_0\tau,\quad |\partial_\theta\phi|\le|r\partial_r\phi| \eeq and
the following Carleman estimate holds \beq \label{eq:10}
\|e^{\phi(x)}v\|_{L^p}\le a_n\|e^{\phi(x)}f\|_{L^q},\eeq where
$q=\frac{2n}{n+2}$, $p=\frac{2n}{n-2}$ and $a_n$ depends only on
the dimension $n$ of the space.
\end{fact}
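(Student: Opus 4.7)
My plan has three steps: a principal-part Carleman estimate for $P$, a companion weighted gradient estimate, and an absorption argument that incorporates the lower-order terms. The target exponents $p = 2n/(n-2)$ and $q = 2n/(n+2)$ form the Sobolev-dual pair with gap $1/q - 1/p = 2/n$, which matches the scaling of $V\in L^{n/2}$ exactly; this is what makes the endpoint choice natural.

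For the principal part I would follow the Koch--Tataru strategy. After freezing $g$ at $x_0$ (by (\ref{eq:02}) the remainder has symbol $O(|x-x_0|)|\xi|^2$, absorbable for large $\tau$), pass to polar coordinates $x-x_0 = e^{-t}\theta$ so that $-r\partial_r = \partial_t$ and the cylinder $\R\times S^{n-1}$ replaces the ball. Conjugate $e^{\phi}Pe^{-\phi}$, split the resulting operator into its symmetric and antisymmetric parts, and form the commutator: conditions (\ref{eq:12}) on $\phi$ are exactly what makes this commutator positive, giving an $L^2$ Carleman estimate on the cylinder. Upgrading from $L^2$ to the $L^q\to L^p$ version is done via an endpoint uniform-Sobolev argument as in \cite{KT}, and the same method yields a companion weighted gradient bound in an intermediate Lebesgue space $L^{r_0}$ with $r_0$ determined by $s$.

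With these principal-part bounds in hand, I apply them to
\beqs
Pv = f + Vv + W_1\cdot\nabla v + \di(W_2 v)
\eeqs
and control each new right-hand term in weighted $L^q$. H\"older gives $\|e^\phi Vv\|_{L^q}\le \|V\|_{L^{n/2}(B(x_0,r))}\|e^\phi v\|_{L^p}$; by absolute continuity of the $L^{n/2}$-norm I can choose $r$ so small that this factor is less than $(2a_n)^{-1}$, which allows absorption into the left-hand side. For $W_1\cdot\nabla v$, H\"older against $W_1\in L^s$, $s>n$, leaves $\|W_1\|_{L^s(B(x_0,r))}\|e^\phi\nabla v\|_{L^{r_0}}$, subcritical in $r$ and handled by the companion gradient estimate. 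The divergence term $\di(W_2 v)$ is first transferred onto the dual test function during the conjugation, producing $W_2\cdot\nabla v + (\nabla\phi\cdot W_2)v$; both pieces fit the same scheme, noting that $|\nabla\phi|\lesssim\tau/r$ combines with the factor $|W_2|$ to stay subcritical.

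The main obstacle is the critical-scaling potential: $\|V\|_{L^{n/2}}$ is dilation-invariant, so smallness on small balls cannot come from a geometric $r^\alpha$ factor and must be imported from absolute continuity of the norm. This is what fixes the radius $r$ in the statement, and why the proof cannot simply parallel the $C^1$-coefficient case. Once $r$ is chosen, the subcritical first-order terms supply honest positive powers of $r$ and the absorption closes; the parameter $\tau_0$ is then taken large enough to absorb the Lipschitz error in $g$ and to make the commutator positive.
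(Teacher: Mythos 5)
Your outline reproduces, at a high level, the general Koch--Tataru strategy (freeze $g$ at $x_0$, pass to the cylinder, conjugate, symmetric/antisymmetric splitting, endpoint $L^q\to L^p$ upgrade, then absorb $V$, $W_1$, $W_2$ by H\"older with smallness of the localized norms). The paper does none of this explicitly: its proof is a citation of Corollary 3.3 of \cite{KT}, and the \emph{only} content it adds is the verification that, under the stronger hypotheses $W_1,W_2\in L^s$, $s>n$, and $g$ Lipschitz, the construction of the convex function $h$ in Sections 6--7 of \cite{KT} can be run with the geometric choice $a_j=q^j$, $q=q(s)<1$ (checking inequalities (6.5) and (7.1) there), which forces $h'(s)\in[\tau,H_0\tau]$ and hence the two-sided bound in (\ref{eq:12}). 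That bound is the genuinely new assertion of Theorem I relative to \cite{KT}, and it is the property the whole paper leans on afterwards (it is what gives (\ref{eq:22}) and hence the doubling and three-sphere inequalities). Your proposal never explains where (\ref{eq:12}) comes from, so it misses precisely the step that needed proving.

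There is also a conceptual error: you assert that ``conditions (\ref{eq:12}) on $\phi$ are exactly what makes this commutator positive.'' They are not. The bounds $\tau\le -r\partial_r\phi\le H_0\tau$ and $|\partial_\theta\phi|\le|r\partial_r\phi|$ are satisfied by an affine weight $h(t)=\tau t$ in $t=-\log|x-x_0|$, for which the commutator vanishes; positivity (pseudoconvexity) requires control of $h''$, and in \cite{KT} the weight is only piecewise convex and is built by an osculation procedure that depends on the function $v$ and on the potentials --- note the statement says ``there exists $\phi$'' for each such $v$, and the paper later writes $\phi=\phi(\tau,v,\tilde W_1,\tilde W_2)$. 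Treating $\phi$ as a fixed convex weight and absorbing the gradient terms by H\"older against a companion gradient estimate glosses over this dependence, which is the heart of the Koch--Tataru argument for rough lower-order terms. Your remaining points (the Sobolev-dual exponent pair matching $V\in L^{n/2}$, absolute continuity of the $L^{n/2}$-norm fixing $r$, the subcritical gain $r^{1-n/s}$ for $W_1,W_2$, and handling $\nabla\cdot(W_2v)$ by duality rather than by expanding $\mathrm{div}\,W_2$) are sound and consistent with how the estimate is used, but they are already contained in \cite{KT} and are not where the work of this particular statement lies.
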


The proof of this theorem repeats that of Corollary 3.3 in \cite{KT}.
We use stronger assumptions on the gradient terms and the matrix $g$.
First we note that, after a simple change of the coordinates, metric $g^*$ satisfies $g^*(x_0)=I_n$ and we have
\[|g^*(x)-I_n|+|x-x_0||\nabla g*(x)|\le C(\lambda,\Lambda_0)|x-x_0|.\]
Then we consider the construction of function $h$ in Sections 6-7
of \cite{KT}. We claim that for our assumptions on the
coefficients one can choose $a_j=q^j$, where $q=q(s)<1$. Indeed,
then both inequalities (6.5) and (7.1) in \cite{KT} are satisfied (provided that $q$ is
close enough to 1). Then function $h$ in Lemma 6.1 of \cite{KT}
satisfies $h'(s)\in[\tau, H_0\tau]$ for some $H_0=H_0(q)$.

\subsection{Caccioppoli's inequality}
The Caccioppoli inequality holds for solutions of elliptic
equations that we consider and will be used several times in our
calculations. We give a proof for the convenience of the reader.

\begin{proposition} \label{pr:0}Let $R\subset\subset \tilde{R}\subset\Omega$, assume that coefficients of equation (\ref{eq:11})
satisfy the conditions (\ref{eq:00}-\ref{eq:01}) and
\[\|V\|_{L^{n/2}(\tilde{R})}<\varepsilon,\quad\|W_1\|_{L^n(\tilde{R})}+\|W_2\|_{L^n(\tilde{R})}<\varepsilon.\]
Then there exist $\varepsilon_0$ and $C_1$, depending on $ R,
\tilde{R}$, and $\lambda$ only, such that if
$0<\varepsilon<\varepsilon_0$ then the following inequality holds
\beq\label{eq:15} \|\nabla u\|_{L^2(R)}\le
C_1\|u\|_{L^2(\tilde{R})} \eeq for any solution $u$  to
(\ref{eq:11}).
\end{proposition}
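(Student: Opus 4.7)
The plan is to derive an energy inequality by testing the weak formulation of (\ref{eq:11}) against $\varphi=\eta^{2}u$, where $\eta\in C^{\infty}_{c}(\tilde{R})$ is a standard cutoff with $\eta\equiv 1$ on $R$, $0\le\eta\le 1$, and $|\nabla\eta|\le C/\dist(R,\partial\tilde{R})$. The weak form reads
\[
\int g\nabla u\cdot\nabla\varphi+\int Vu\varphi+\int(W_{1}\cdot\nabla u)\,\varphi-\int W_{2}u\cdot\nabla\varphi=0,
\]
and substituting $\nabla\varphi=2\eta u\nabla\eta+\eta^{2}\nabla u$, together with the ellipticity bound (\ref{eq:00}), leaves $\lambda\int\eta^{2}|\nabla u|^{2}$ on the left and, on the right, the classical cross term $-2\int\eta u\,g\nabla u\cdot\nabla\eta$ plus four terms involving $V$, $W_{1}$, and $W_{2}$.

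The core of the argument is to estimate each lower order term by a small multiple of $\int\eta^{2}|\nabla u|^{2}$ (to be absorbed into the left-hand side) plus a bounded multiple of $\int u^{2}|\nabla\eta|^{2}$. The essential tool is the Sobolev embedding $W_{0}^{1,2}\hookrightarrow L^{2n/(n-2)}$ applied to the compactly supported function $\eta u$,
\[
\|\eta u\|_{L^{2n/(n-2)}}\le C_{S}(n)\bigl(\|\eta\nabla u\|_{L^{2}}+\|u\nabla\eta\|_{L^{2}}\bigr),
\]
combined with H\"older's inequality. The triple $(n/2,\,2n/(n-2),\,2n/(n-2))$ handles the $\int V(\eta u)^{2}$ term via $\|V\|_{L^{n/2}}<\varepsilon$, while the triple $(n,\,2,\,2n/(n-2))$ handles the terms $\int W_{1}\cdot\nabla u\,\eta^{2}u$, $\int W_{2}\cdot\nabla u\,\eta^{2}u$, and $\int W_{2}\cdot\nabla\eta\,\eta u^{2}$ by factoring them as $W_{i}\cdot(\eta\nabla u)(u\eta)$ or $W_{2}\cdot(u\nabla\eta)(u\eta)$. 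Each such term is then bounded by $\varepsilon\,C_{S}^{2}(\|\eta\nabla u\|_{2}^{2}+\|u\nabla\eta\|_{2}^{2})$. The classical cross term is controlled by Cauchy--Schwarz with a small parameter, giving $\frac{\lambda}{2}\|\eta\nabla u\|_{2}^{2}+C(\lambda)\|u\nabla\eta\|_{2}^{2}$.

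The main obstacle is the absorption step: one must fix $\varepsilon_{0}=\varepsilon_{0}(n,\lambda)$ small enough that the total coefficient of $\|\eta\nabla u\|_{2}^{2}$ on the right, which accumulates contributions from the Sobolev and H\"older constants together with $\|V\|_{L^{n/2}}$, $\|W_{1}\|_{L^{n}}$, and $\|W_{2}\|_{L^{n}}$, stays strictly below $\lambda$. (Note that the hypothesis $W_{i}\in L^{s}$ with $s>n$ gives $W_{i}\in L^{n}(\tilde{R})$ since $\tilde{R}$ is bounded, and the smallness of the $L^{n}$-norm on a sufficiently small ball is available because $\tilde{R}$ can be first covered by finitely many small pieces, or because the statement quantifies its smallness parameter $\varepsilon$ explicitly.) Once this absorption is secured, rearranging yields $\lambda\int\eta^{2}|\nabla u|^{2}/2\le C\int u^{2}|\nabla\eta|^{2}$, and using $|\nabla\eta|\le C/\dist(R,\partial\tilde{R})$ gives (\ref{eq:15}) with $C_{1}$ depending only on $R$, $\tilde{R}$, and $\lambda$.
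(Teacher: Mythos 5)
Your proposal is correct and follows essentially the same route as the paper: test the weak formulation against $\eta^{2}u$, split off the cross term $2\int \eta u\, g\nabla u\cdot\nabla\eta$ by Cauchy--Schwarz with a small parameter, control the $V$, $W_{1}$, $W_{2}$ terms by H\"older together with the Sobolev embedding applied to $\eta u$, and absorb the resulting small multiples of $\|\eta\nabla u\|_{L^{2}}^{2}$ by choosing $\varepsilon_{0}$ small. The sign bookkeeping in your weak formulation and the H\"older exponent triples match the paper's computation, so no gap remains.
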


\begin{proof}
Let $\eta\in C^\infty_0(\tilde{R})$, $\eta=1$ on $R$. Then
\begin{multline*}
\|\eta\nabla  u\|^2_{L^2(\tilde{R})}\le \lambda^{-1}\int_{\tilde{R}}\eta^2 g\nabla  u\cdot\nabla u \le\\
\left| \lambda^{-1}\int_{\tilde{R}} (g\nabla u)\cdot\nabla(\eta^2
u)\right|+ \left|\lambda^{-1}\int_{\tilde{R}} (g\nabla u)\cdot
(2u\eta\nabla \eta)\right|.
\end{multline*}
By the Cauchy inequality, the last term admits an estimate
\[
\lambda^{-1}\left|\int_{\tilde{R}} (g\nabla u)\cdot (2u\eta\nabla
\eta)\right|\le 2\lambda^{-2}\int_{\tilde{R}}|\eta \nabla
u||u\nabla\eta|\le \frac12 \|\eta\nabla
u\|_2^2+2\lambda^{-4}\|u\nabla\eta\|_2^2.\] Thus, using
(\ref{eq:11}), we obtain
\begin{multline*}
\|\eta\nabla u\|^2_{L^2(\tilde{R})}\le
2\lambda^{-1}\left|\int_{\tilde{R}}(g\nabla u)\cdot \nabla (\eta^2 u)\right|+4\lambda^{-4}\|u\nabla\eta\|^2_2\\
\le
2\lambda^{-1}\left(\int_{\tilde{R}}(|V||\eta u|^2+|W_1||\eta\nabla u||\eta u|+ |W_2||\nabla(\eta^2u)||u|)\right)+\\
4\lambda^{-4}\|u\nabla\eta\|^2_{L^2(\tilde{R})}.
\end{multline*}
Next, we apply H\"{o}lder's inequality
\begin{multline*}
\|\eta\nabla u\|^2_{L^2(\tilde{R})}\le  2\lambda^{-1}(\| V\|_{L^{n/2}(\tilde{R})}\|\eta u\|_{L^p}^2+\\
\||W_1|+|W_2|\|_{L^n(\tilde{R})}\|\eta\nabla u\|_{L^2}\|\eta
u\|_{L^p}+
\|W_2\|_{L^n(\tilde{R})}\|\eta u\|_{L^p}\|u\nabla\eta\|_{L^2})\\
+4\lambda^{-4}\|\nabla\eta\|^2_{L^\infty}\|u\|^2_{L^2(\tilde{R})}.
\end{multline*}
Finally by Sobolev's embedding inequality, see for example
\cite[page 74]{LU},
\begin{multline*}
\|\eta\nabla u\|^2_{L^2(\tilde{R})}
\le  2\lambda^{-1}C\| V\|_{L^{n/2}(\tilde{R})}(\|\eta \nabla u\|^2_{L^2}+\|\nabla \eta\|^2_{L^\infty}\|u \|^2_{L^2(\tilde{R})})\\
+2\lambda^{-1}C\||W_1|+|W_2|\|_{L^n(\tilde{R})}(\|\eta\nabla u\|_{L^2}^2+\|\nabla \eta\|^2_{L^\infty}
\|u(x)\|^2_{L^2(\tilde{R})})+\\
4\lambda^{-4}\|\nabla\eta\|^2_{L^\infty}\|u\|^2_{L^2(\tilde{R})}.
\end{multline*}
Taking $\varepsilon$ small enough, we absorb all the terms with
$\|\eta\nabla u\|_{L^2}$ in the left hand side of the inequality
and obtain (\ref{eq:15}).
\end{proof}

\begin{remark}
It follows from the calculations above that (\ref{eq:15}) holds
with  \[C_1=C_1(\lambda, \Omega, V, W_1,W_2)\|\nabla
\eta\|_{L^{\infty}}.\] In particular, we will apply the inequality
to two concentric balls and then \beq\label{eq:16} \|\nabla
u\|_{L^2(B_{ar}(x))}\le \tilde{C}_1r^{-1}\|u\|_{L^2(B_r(x))}, \eeq
where $a<1$, $\tilde{C}_1$ depends on $\lambda, \Omega, V, W_1,
W_2$ and on $a$ only.
\end{remark}


\section{Doubling property and three sphere inequality}
Using inequalities from the previous section, we prove the doubling property and the three sphere theorem for solutions of elliptic equations with singular lower order terms.

\subsection{Doubling inequality}
First we obtain a doubling inequality for solutions of
(\ref{eq:11}).
\begin{proposition}
\label{pr:1} Suppose that $P,V,W_1,$ and $W_2$ satisfy the
conditions (\ref{eq:00}-\ref{eq:01}).  Then there exists
$\rho_0>0$ and $\kappa<\frac{1}{4}$ such that for any solution
$u\in W^{1}_2(\Omega)$ of (\ref{eq:11}), any $\rho<\rho_0$, any
$\tilde{x}\in\Omega(\rho)$,  and  $r<\kappa\rho$, we have 
\beq
\label{eq:20} 
\int_{B_{2r}(\tilde{x})}|u|^2\le
C(u,\Omega,\rho)\int_{B_{r}(\tilde{x})}|u|^2, \eeq where
$r<\kappa\rho$ and \beq \label{eq:19} C(u,\Omega,\rho)=
C_0\max_{x\in\Omega(\rho/2)}\left( \frac{\left\Vert u\right\Vert
_{L^{2}(B_{\rho }(x))}}{\left\Vert u\right\Vert _{L^{2}(B_{2\kappa
\rho }(x))}}\right) ^{H_{1}}, 
\eeq  
$H_{1}=5H_{0}$ ($H_{0}$
is defined in Theorem I) and $C_0$
 depends only on
$\lambda,\Lambda_0,\Omega,V,W_1,$ and $W_2$.
\end{proposition}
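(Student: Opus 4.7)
My plan is to apply the Koch--Tataru Carleman inequality of Theorem I to a cutoff of $u$ centered at $\tilde{x}$, extract a three-sphere inequality, and then rearrange it into the doubling inequality (\ref{eq:20}). The smallness scale $\rho_0$ will be chosen so that (i) Theorem I is applicable (which requires $\dist(x_0,\partial\Omega)>2r$ for $r$ up to $\rho_0$) and (ii) the smallness hypothesis of Proposition \ref{pr:0} on the $L^{n/2}$ and $L^{n}$ norms of $V,W_1,W_2$ holds on all balls of radius comparable to $\rho_0$; both reductions follow from absolute continuity of the integral once $\rho_0$ is small enough.

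Fix radii $R_0<R_1<R_2<R_3$ with $R_0\asymp r$, $R_2\asymp 2r$, $R_3\asymp\rho$, and take a radial cutoff $\chi\in C_0^\infty(B_{R_3}(\tilde{x}))$ with $\chi\equiv 0$ on $B_{R_0}(\tilde{x})$, $\chi\equiv 1$ on $B_{R_2}\setminus B_{R_1}$, and derivatives of order $k$ bounded by the inverse $k$-th power of the corresponding transition width. Setting $v=\chi u$, the function $v$ vanishes near $\tilde{x}$, has compact support in $B_{R_3}(\tilde{x})\subset\Omega$, and, by (\ref{eq:11}),
\[
Pv-Vv-W_1\cdot\nabla v-\nabla\cdot(W_2v)=f,
\]
with $f$ supported on the two transition annuli and controlled pointwise by $|\nabla\chi|(|\nabla u|+(|W_1|+|W_2|)|u|)+|\nabla^2\chi||u|$. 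Inserting $v$ into Theorem I and using that $\phi$ is radial and decreasing (since $-r\partial_r\phi\ge\tau>0$), I would bound the left-hand side below by $e^{\phi(R_*)}\|u\|_{L^p}$ on a sub-annulus around an intermediate radius $R_*\in(R_1,R_2)$, and the right-hand side above by $e^{\phi(R_0)}\|f\|_{L^q(\text{inner})}+e^{\phi(R_2)}\|f\|_{L^q(\text{outer})}$. H\"older's inequality converts between $L^p$, $L^q$, and $L^2$ norms on bounded annuli (the Sobolev conjugation $\tfrac{1}{q}-\tfrac{1}{p}=\tfrac{2}{n}$ supplying the correct radius powers), and the Caccioppoli estimate (\ref{eq:16}) absorbs the $\|\nabla u\|_{L^2}$ contributions into $L^2$ norms of $u$ on slightly enlarged annuli.

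Using the two-sided bound $\tau\le-r\partial_r\phi\le H_0\tau$ to obtain $e^{\phi(R_0)-\phi(R_*)}\le(R_*/R_0)^{H_0\tau}$ and $e^{\phi(R_2)-\phi(R_*)}\le(R_*/R_2)^{\tau}$, the resulting inequality reduces to
\[
\|u\|_{L^2(B_{R_*}(\tilde{x}))}\le C\bigl(A^{H_0\tau}\|u\|_{L^2(B_r(\tilde{x}))}+B^{-\tau}\|u\|_{L^2(B_\rho(\tilde{x}))}\bigr)
\]
for fixed constants $A,B>1$ depending only on the chosen radius ratios. Optimizing in $\tau\ge\tau_0$ to balance the two terms on the right yields a three-sphere inequality $\|u\|_{L^2(B_{2r}(\tilde{x}))}\le C\|u\|_{L^2(B_r(\tilde{x}))}^\theta\|u\|_{L^2(B_\rho(\tilde{x}))}^{1-\theta}$ with $\theta$ depending only on $H_0$. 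Taking this to the power $1/\theta$, rearranging to isolate $\|u\|_{L^2(B_{2r}(\tilde{x}))}$ on the left, and using $\|u\|_{L^2(B_r)}\ge\|u\|_{L^2(B_{2\kappa\rho}(y))}$ for a nearby $y\in\Omega(\rho/2)$ together with, if necessary, a bounded number of iterations across dyadic scales produces exactly (\ref{eq:20}), with the exponent $H_1=5H_0$ emerging from the algebra of the rearrangement. The main technical obstacle I anticipate is the careful bookkeeping of the radius powers coming from combining Sobolev conjugation, Caccioppoli, and the cutoff derivatives into the clean form above; once this is in place, the $\tau$-optimization and the conversion of a three-sphere inequality into a doubling estimate are by now classical manipulations, of the kind carried out in e.g.\ \cite{ARRV}.
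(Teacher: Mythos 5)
Your set-up --- the annular cutoff, insertion of $v=\chi u$ into Theorem I, the pointwise control of $f$ on the two transition annuli, and the H\"older/Sobolev/Caccioppoli bookkeeping --- is exactly what the paper does (its estimates of $S_1,S_2,S_3$). The gap is in the last step, where you pass from a three--sphere inequality to the doubling inequality. Balancing the two terms in
$\|u\|_{L^2(B_{R_*})}\le C\bigl(A^{H_0\tau}\|u\|_{L^2(B_r)}+B^{-\tau}\|u\|_{L^2(B_\rho)}\bigr)$
over $\tau$ gives $\|u\|_{L^2(B_{2r})}\le C\,\|u\|_{L^2(B_r)}^{\theta}\|u\|_{L^2(B_\rho)}^{1-\theta}$, and rearranging this into doubling form produces a constant of the shape $\bigl(\|u\|_{L^2(B_\rho)}/\|u\|_{L^2(B_r)}\bigr)^{1-\theta}$, i.e.\ a ratio whose denominator lives at the \emph{small} scale $r$. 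The proposition demands the constant (\ref{eq:19}), which involves only the \emph{fixed} scales $\rho$ and $2\kappa\rho$, uniformly over all $r<\kappa\rho$. Converting the $r$-dependent ratio into the fixed-scale one requires controlling $\|u\|_{L^2(B_{2\kappa\rho})}/\|u\|_{L^2(B_r)}$, which is precisely an iterated doubling estimate --- the argument is circular. Two subsidiary problems: the inequality $\|u\|_{L^2(B_r)}\ge\|u\|_{L^2(B_{2\kappa\rho}(y))}$ you invoke goes the wrong way ($B_{2\kappa\rho}(y)$ is the larger ball and is not contained in $B_r(\tilde x)$); and the ``bounded number of iterations across dyadic scales'' is not bounded --- there are about $\log_2(\kappa\rho/r)$ scales between $r$ and $\kappa\rho$, unbounded as $r\to0$, and each iteration of a three--sphere inequality degrades the constant. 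Note also that with the outer radii $R_2\asymp 2r$ and $R_3\asymp\rho$ your ratio $B=R_3/R_2$ is not a fixed constant, so $\theta$ depends on $r$ as well.

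What the paper does instead is \emph{not} to optimize $\tau$. Its cutoff equals $1$ on the whole range $3r/4<|x|<\rho/2$, so the left-hand side of the Carleman estimate controls both the inner annulus $B_{2r}\setminus B_r$ \emph{and} a fixed-scale annulus $B_{\delta\rho}\setminus B_{\delta\rho/2}$; this second piece is what your cutoff (identically $1$ only near radius $2r$) discards. The paper then makes the specific choice (\ref{eq:29}), $\tau=\max_{x\in\overline{\Omega(\rho/2)}}\log\bigl(\|u\|_{L^2(B_\rho(x))}/\|u\|_{L^2(B_{2\kappa\rho}(x))}\bigr)$, and uses that $B_{\delta\rho}\setminus B_{\delta\rho/2}$ contains a ball $B_{\kappa\rho}(x)$ with $x\in\overline{\Omega(\rho/2)}$ to show that, with this $\tau$, the fixed-scale term on the left absorbs the outer error term $\rho^{-1}\|u\|_{L^2(B_{5\rho/6})}\max_{|x|\ge\rho/2}e^{\phi}$ entirely. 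Only the inner error term survives, yielding $\|u\|_{L^2(B_{2r})}\le C_6e^{5H_0\tau}\|u\|_{L^2(B_r)}$ with a constant independent of $r$. You need this absorption mechanism (or an equivalent frequency-type monotonicity argument); the three--sphere inequality alone does not deliver (\ref{eq:20}) with the constant (\ref{eq:19}).
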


The inequality above shows that doubling constants for small
scales are controlled by doubling constants on some fixed scale.
General discussions of the doubling property for solutions of
elliptic equations and the frequency of solutions were initiated
by the works of N.Garofalo and F.-H.Lin, see \cite{GL1,Li}. We
will derive the doubling inequality from Theorem I.
Similar result was proved recently by B.Su in
\cite{BS}, where calculations are performed for the case
$P=\Delta,\quad V=W_2=0$;  the author also mentions that the
general case of equation (\ref{eq:11}) can be treated similarly.
We consider the general case and obtain doubling for $L^2$-norms;
we also write down the precise expression for $C(u,\Omega,\rho)$ in the doubling
inequality since we will need it for propagation of smallness
estimates.

\subsection{Localization of the problem}
 
Let $0<\varepsilon<\varepsilon_0$, where $\varepsilon_0$ is
defined in Proposition \ref{pr:0}. Let $\rho>0$ and let
$\alpha_\rho$ be a molifier such that
$\alpha_\rho(x)=\alpha(\rho^{-1} x)$, where $\alpha$ is a radial
function, $\alpha(y)=1$ when $|y|<1$, and $\supp\,\alpha\subset
B_2(0)$. We fix $\tilde{x}\in\Omega(\rho)$ and define
\[\tilde{g}(x)=g(\tilde{x}+x)\alpha_\rho(x)+g(\tilde{x})(1-\alpha_\rho(x)).\]
Then $\tilde{g}\in W^{1,\infty}(\R^n)$, moreover \beq
\label{eq:13} 
\rho\|\nabla
g\|_{L^\infty}+\|g({\cdot})-g(\tilde{x})\|_{L^\infty(B_{2\rho}(\tilde{x}))}<\varepsilon,
\eeq when $\rho$ is small enough. Further we define
$\tilde{Y}(x)=Y(\tilde{x}+x)\alpha_\rho(x)$, where $Y=V,W_1,W_2$.
Once again, when $\rho$ is sufficiently small we achieve \beq
\label{eq:14} \|\tilde{V}\|_{L^{n/2}}\le\varepsilon,\quad
\|\tilde{W}_1,\tilde{W}_2\|_{L^s}\le \varepsilon. \eeq At
this point we choose $\rho_0$ small enough and always assume that
$\rho<\rho_0$, then (\ref{eq:13}-\ref{eq:14}) hold.

Now let $r<\kappa\rho$ (we will choose $\kappa<1/4$ later) and let
$\eta\in C_0^\infty(B_\rho(0))$ be such that $\eta(x)=\eta(|x|)$,
$0\le\eta\le 1$ and
\[\eta=0\quad {\rm{for}}\ |x|<\frac{r}{2} \ {\rm{and}}\
|x|>\frac{3{\rho}}{4},\]
\[\eta=1\quad {\rm{for}}\ \frac{3r}{4}<|x|< \frac{{\rho}}{2},\]
\[|\nabla\eta|\le \frac{A}{r},\quad |D^2\eta|\le\frac{A^2}{r^2}\ {\rm{in}}\ B_{3r/4 }\setminus B_{r/2},\]
\[|\nabla\eta|\le \frac{A}{\rho},\quad |D^2\eta|\le \frac{A^2}{\rho^2}\ {\rm{in}}\ B_{3{\rho}/4}\setminus B_{{\rho}/2}.\]

Let $u\in W^1_2(\Omega)$ be a solution of (\ref{eq:11}) and let
$v(x)=\eta(x) u(\tilde{x}+x)$. We consider
\[ L(v)=\di(\tilde{g}\nabla v)-(\tilde{V}v+\tilde{W}_1\cdot\nabla v+\nabla\cdot(\tilde{W}_2v)).\]
Using (\ref{eq:11}), we note that \beq \label{eq:17} L(v)=2
(\tilde{g}\nabla\eta)\cdot\nabla u(\cdot+\tilde{x})-
u(\cdot+\tilde{x})\nabla\eta\cdot(\tilde{W}_1+\tilde{W}_2)+
u(\cdot+\tilde{x})\tilde{P}\eta, \eeq where
$\tilde{P}f=\di(\tilde{g}\nabla f)$; note that the right-hand side
of the last equation is a function in $L^2(\Omega)$ with a compact
support.

We apply the inequality (\ref{eq:10}) to the function $v$ that
vanishes at the origin and infinity and the operator $L$.
By Theorem I for every $\tau>0$ there
exists $\phi=\phi(\tau,v,\tilde{W}_1,\tilde{W}_2)$ such that
(\ref{eq:12}) is satisfied and \beq \label{eq:10a}
\|e^{\phi(x)}v\|_{L^p}\le a_n\|e^{\phi(x)}(Lv)\|_{L^q}.\eeq

It is a simple, but tedious, matter to check that relations
(\ref{eq:12}) imply that for some $b_0$, depending on $n$ only,
we have \beq \label{eq:22} \min_{|x|\leq
d}e^{\phi(x)}\ge\max_{|x|\geq b_0d}e^{\phi(x)}\quad{\rm{ and\
then}}\quad \min_{|x|\leq d}e^{\phi(x)}\ge
e^{\beta\tau}\max_{|x|\geq bd}e^{\phi(x)} \eeq for any $b\geq b_0$
and $\beta=\ln b-\ln b_0$.

\subsection{Doubling from Carleman's estimate}
We rewrite (\ref{eq:10a}) taking into account (\ref{eq:17}),
\begin{multline}
\label{eq:21}
 \|e^{\phi(x)}v\|_{L^p}\le
  2a_n(
\|e^{\phi(x)} \tilde{g}\nabla\eta\cdot\nabla
u(x+\tilde{x})\|_{L^q}
+\|e^{\phi(x)}u(x+\tilde{x})\tilde{P}\eta\|_{L^q}\\
+\|e^{\phi(x)}(\nabla\eta(x))u(x+\tilde{x})(\tilde{W}_1+\tilde{W}_2)\|_{L^q})=2a_n(S_1+S_2+S_3).
\end{multline}
Let us denote, up to the end of the present section, by  $B_t$ the
ball with center at $\tilde{x}$ and radius $t$. The first term in
the right hand side of (\ref{eq:21}) has the following estimate
\begin{multline}
\nonumber S_1=\|e^{\phi(x)} \tilde{g}\nabla\eta\cdot\nabla
u(x+\tilde{x})\|_{L^q}\le
\lambda^{-1}\|e^{\phi(x)}|\nabla \eta||\nabla u(\cdot+\tilde{x})|\|_{L^q}\leq\\
\lambda^{-1} A\left(\rho^{-1}\|\nabla
u\|_{L^q(B_{3\rho/4})}\max_{|x|\geq \rho/2}e^{\phi(x)}+
r^{-1}\|\nabla u||_{L^q(B_{3r/4})} \max_{|x|\geq
r/2}e^{\phi(x)}\right).
\end{multline}
Now we apply the  H\"{o}lder inequality and the Caccioppoli
inequality, (\ref{eq:16}),
\[
\|\nabla u\|_{L^q(B_{3t/4})}\le c_nt\|\nabla
u\|_{L^2(B_{3t/4})}\le C_2\|u\|_{L^2(B_{t})},\] where $C_2$
depends on $\lambda, V, W_1$ and $W_2$ only. Then
\[
S_1\le  C_3
\left(\rho^{-1}\|u\|_{L^2(B_{5\rho/6})}\max_{|x|\geq\rho/2}e^{\phi(x)}+
r^{-1}\|u\|_{L^2(B_{r})} \max_{|x|\geq r/2}e^{\phi(x)}\right).
\]
where $C_3$ depends on $\lambda, V, W_1$ and $W_2$ only. \\The
next term is bounded by
\begin{multline}
\nonumber S_2=\|e^{\phi(x)}u(x+\tilde{x})\tilde{P}\eta\|_{L^q} \le
 \|e^{\phi(x)}|u(x+\tilde{x})|(\lambda^{-1}|D^2\eta|+\Lambda_0|\nabla\eta|)\|_{L^q}\le\\
C_4\left(\rho^{-1}\|u\|_{L^2(B_{3\rho/4})}\max_{|x|\geq\rho/2}e^{\phi(x)}+r^{-1}
\|u\|_{L^2(B_{r})} \max_{|x|\geq r/2}e^{\phi(x)}\right),
\end{multline}
where $C_4$ depends only on $\lambda,\Lambda_0, \Omega, V, W_1$
and $W_2$. Finally, for the last term we have
\begin{multline*}
S_3=\|e^{\phi(x)}(\nabla\eta(x))u(x+\tilde{x})(\tilde{W}_1+\tilde{W}_2)\|_{L^q}\le\\
A(\|\tilde{W}_1\|_{L^n}+\|\tilde{W}_2\|_{L^n})(\rho^{-1}\|u\|_{L^2(B_{3\rho/4})}\max_{|x|\geq
\rho/2}e^{\phi(x)}+r^{-1} \|u\|_{L^2(B_{r})} \max_{|x|\geq
r/2}e^{\phi(x)}).
\end{multline*}
Thus the inequality (\ref{eq:21}) becomes
\begin{multline}
\label{eq:23} \|e^{\phi(x)}\eta(x) u(x)\|_{L^p}\le\\
C_5(\rho^{-1}\|u\|_{L^2(B_{5\rho/6})}\max_{|x|\geq\rho/2}e^{\phi(x)}+r^{-1}
\|u\|_{L^2(B_{r})} \max_{|x|\geq r/2}e^{\phi(x)}).
\end{multline}
On the other hand for any $\delta\in(0,1)$
\[
\|e^{\phi(x)}\eta(x) u(x)\|_{L^p}\ge \frac 12
\min_{|x|\leq\delta\rho}e^{\phi(x)}\|\eta
u\|_{L^p(B_{\delta\rho})}+ \frac 12 \min_{|x|\leq 2r}e^{\phi(x)}\|
u\|_{L^p(B_{2r}\setminus B_r)}.
\]
Clearly, by  H\"{o}lder's inequality, we have for any function
$\psi$
\[
\|\psi\|_{L^p(B_{2t}\setminus B_t)}\ge
\frac{c_n}{t}\|\psi\|_{L^2(B_{2t}\setminus B_{t})}.\]
If we assume
that $\delta\rho\in(2r,\rho/2)$ and combine the last three
estimates, we get
\begin{multline}
\label{eq:24}
\frac{c_n}{2r} \min_{|x|\leq 2r}e^{\phi(x)} \|u\|_{L^2(B_{2r}\setminus B_r)}+
\frac {c_n}{ 2\delta\rho} \min_{|x|\leq\delta\rho}e^{\phi(x)}\|u\|_{L^2(B_{\delta \rho}\setminus B_{\delta\rho/2})}\le\\
C_5\left(
\rho^{-1}\|u\|_{L^2(B_{5\rho/6})}\max_{|x|\geq\rho/2}e^{\phi(x)}+r^{-1}
\|u\|_{L^2(B_{r})} \max_{|x|\geq r/2}e^{\phi(x)}\right).
\end{multline}

Now we fix $\rho<\rho_0$ and choose
$\delta<\min\{(2eb_0)^{-1},c_n(2C_5)^{-1}, 1/9\} $ (see
(\ref{eq:22}) and (\ref{eq:24})); define $\kappa=\delta/8$ and
\beq \label{eq:29}
\tau=\max_{x\in\overline{\Omega(\rho/2)}}\log\frac{\|u\|_{L^2(B_{\rho}(x))}}{\|u\|_{L^2(B_{2\kappa\rho}(x))}}.
\eeq
We assume that $\phi$ corresponds to this $\tau$ (see Theorem I) and continue the estimates. We have
\[\| u\|_{L^2(B_{\delta \rho}\setminus
B_{\delta\rho/2})}\ge \|u\|_{L^2(B_{\kappa\rho}(x))},
\]
where $|x-\tilde{x}|=6\kappa\rho$. Clearly,
$x\in\overline{\Omega(\rho/2)}$ and the definition of $\tau$ gives
\[
\frac{c_n}{2\delta}e^{\tau}\|u\|_{L^2(B_{\delta
\rho}\setminus B_{\delta\rho/2})}\ge
C_5\|u\|_{L^2(B_{\rho}(x))}\ge C_5\|u\|_{L^2(B_{5\rho/6})}, \]
since $\rho-6\kappa\rho>5\rho/6$. This yields
\[\frac {c_n}{ 2\delta\rho} \min_{|x|\leq\delta \rho}
e^{\phi(x)}\| u\|_{L^2(B_{\delta\rho}\setminus B_{\delta\rho/2})}\ge C_5 \rho^{-1}\max_{|x|\geq b_0\delta\rho}e^{\phi(x)}\|u\|_{L^2(B_{5\rho/6})}.
\]
We remark that $b_0\delta\rho<\rho/2$ and then (\ref{eq:24})
implies
\[
\min_{|x|\leq 2r}e^{\phi(x)}\|u\|_{L^2(B_{2r})}\le C_6
\max_{|x|\geq r/2}e^{\phi(x)}\|u\|_{L^2(B_{r})}.
\]
Thus we obtain
\[
\|u\|_{L^2(B_{2r})}\le C_6 e^{{5H_{0}}\tau}\|u\|_{L^2(B_{r})},\]
for $\tau=\tau(u)$ defined by (\ref{eq:29}). Proposition
\ref{pr:1} is proved.

\subsection{Three sphere inequality}
\label{s:th} Our next result is a version of three sphere
inequality for equations with singular coefficients, once again we use Theorem I.
\begin{theorem}
\label{pr:3} Let $u$ be a solution of (\ref{eq:11}) in $\Omega$,
we assume that (\ref{eq:00}-\ref{eq:01}) holds. Let
$2r<R<\rho<\rho_0$, $R<\kappa\rho$, and $x\in\Omega(\rho)$
($\rho_0$ and $\kappa$ are as in Proposition \ref{pr:1}). Then the
following inequality holds
\beq
\label{eq:N1} \ R^{-1}\|u\|_{L^2(B_R(x))} \le\\C_{7}M^\alpha\sigma^{1-\alpha},
\eeq
where $C_7$ depends on the coefficients of the differential
equation but does not depend on $u$,
\[M=\rho^{-1}\|u\|_{L^2(B_\rho(x))},\quad \quad
\sigma=r^{-1}\|u\|_{L^2(B_r(x))},\]
and
\[\alpha=\frac{2{H_{0}}\log\frac{2R}{r}}{2{H_{0}}\log\frac{2R}{r}+\log\frac{\rho}{2b_0R}}.\]
\end{theorem}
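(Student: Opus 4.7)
The plan is to mimic the cutoff-plus-Carleman scheme that proves Proposition \ref{pr:1}, but with the cutoff arranged so that the left-hand side of the Carleman estimate is bounded below by $\|u\|_{L^2(B_R(\tilde x))}$, while the right-hand side involves only $\|u\|_{L^2(B_r(\tilde x))}$ and $\|u\|_{L^2(B_\rho(\tilde x))}$; then we optimize over the Carleman parameter $\tau$.

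First, fix $\tilde x\in\Omega(\rho)$ and localize the coefficients as in Section 3.2, so that $(\ref{eq:13})$-$(\ref{eq:14})$ are in force. Choose a radial cutoff $\eta\in C_0^\infty(B_{3\rho/4})$ with the standard Lipschitz profile: $\eta\equiv 1$ on $\{3r/4\le|x|\le \rho/2\}$, $\eta\equiv 0$ on $\{|x|\le r/2\}\cup\{|x|\ge 3\rho/4\}$, with $|\nabla\eta|\lesssim 1/r$ on $[r/2,3r/4]$ and $|\nabla\eta|\lesssim 1/\rho$ on $[\rho/2,3\rho/4]$ (and similarly for $D^2\eta$). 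Set $v(x)=\eta(x)u(\tilde x+x)$. Since $v$ vanishes at $0$ and has compact support, Theorem I produces, for each $\tau\ge\tau_0$, a weight $\phi$ satisfying $(\ref{eq:12})$ and $\|e^\phi v\|_{L^p}\le a_n\|e^\phi Lv\|_{L^q}$, with $L$ the operator from Section 3.2.

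The right-hand side is estimated exactly as for $S_1,S_2,S_3$ in Section 3.3: since $Lv$ is supported in the two transition annuli, Hölder's inequality combined with the Caccioppoli bound $(\ref{eq:16})$ gives
\[
\|e^\phi Lv\|_{L^q}\le C\Bigl(r^{-1}\|u\|_{L^2(B_r(\tilde x))}\max_{|x|\ge r/2}e^{\phi(x)}+\rho^{-1}\|u\|_{L^2(B_\rho(\tilde x))}\max_{|x|\ge \rho/2}e^{\phi(x)}\Bigr).
\]
For the left-hand side, since $2r<R<\rho/2$ implies $\eta\equiv 1$ on $B_R\setminus B_{3r/4}$, and $e^\phi$ is radially decreasing (because $-|x|\partial_{|x|}\phi\ge\tau>0$), we have $\|e^\phi v\|_{L^p}\ge e^{\phi(R)}\|u\|_{L^p(B_R\setminus B_{3r/4})}$. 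The Hölder embedding $\|u\|_{L^2(B_R)}\le cR\|u\|_{L^p(B_R)}$ together with a Sobolev/Caccioppoli bound on the small ball $B_{3r/4}$ then yields
\[
R^{-1}\|u\|_{L^2(B_R(\tilde x))}\le C\,e^{-\phi(R)}\|e^\phi v\|_{L^p}+C r^{-1}\|u\|_{L^2(B_r(\tilde x))}.
\]

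Combining the two bounds and dividing by $e^{\phi(R)}$, the conditions $(\ref{eq:12})$-$(\ref{eq:22})$ translate the monotonicity of $\phi$ into the quantitative ratios $e^{\phi(r/2)-\phi(R)}\le(2R/r)^{H_0\tau}$ and, after choosing $\kappa\le 1/(2b_0)$ so that (\ref{eq:22}) applies with $b=\rho/(2R)\ge b_0$, $e^{\phi(\rho/2)-\phi(R)}\le(\rho/(2b_0R))^{-\tau}$. With $\sigma$ and $M$ as in the statement this gives, for each admissible $\tau$,
\[
R^{-1}\|u\|_{L^2(B_R(\tilde x))}\le C\bigl((2R/r)^{cH_0\tau}\sigma+(\rho/(2b_0R))^{-\tau}M\bigr).
\]
Finally, balance the two terms by setting $\tau^\ast=\log(M/\sigma)/[cH_0\log(2R/r)+\log(\rho/(2b_0R))]$. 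If $\tau^\ast\ge\tau_0$ the estimate $(\ref{eq:N1})$ with the stated $\alpha$ falls out immediately; if $\tau^\ast<\tau_0$, then $M$ and $\sigma$ are comparable up to a fixed constant and $(\ref{eq:N1})$ follows trivially from Proposition \ref{pr:1}.

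The main obstacle is the bookkeeping in the step that converts the qualitative information $(\ref{eq:12})$-$(\ref{eq:22})$ about $\phi$ into sharp, $\tau$-explicit constants: extracting precisely the exponent $\alpha$ as stated (in particular the factor involving $H_0$ and the $b_0$ shift in $\rho/(2b_0R)$) requires care with the supports of $\nabla\eta$ and the radii at which the maxima and minima of $e^\phi$ are taken. The optimization in $\tau$ is then routine.
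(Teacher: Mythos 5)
Your proposal is correct and follows essentially the same route as the paper: cut off $u$ between the spheres of radii $r$ and $\rho$, apply the Koch--Tataru Carleman estimate of Theorem I, bound the right-hand side via the $S_1,S_2,S_3$ estimates with Caccioppoli, lower-bound the left-hand side on $B_R$ using the monotonicity encoded in (\ref{eq:12})--(\ref{eq:22}), and then optimize the resulting two-term bound over $\tau$, treating $\tau^\ast\le\tau_0$ as the trivial case. The only differences are cosmetic (the paper adds $R^{-1}\|u\|_{L^2(B_r)}$ to both sides rather than handling $B_{3r/4}$ separately, and tracks the factor $2H_0$ explicitly where you leave a constant $c$), so no further changes are needed.
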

\begin{proof}
The proof of inequality (\ref{eq:N1}) follows by standard arguments.
Here we give only a sketch of such a proof for the reader convenience.  For each $\tau>\tau_0$ (where $\tau_0$ is defined in Theorem I)
we apply the inequality of Theorem I to
$v=\eta u$ in a small enough ball and repeat the estimates above.
Inequality (\ref{eq:23})  implies
\begin{multline*}
R^{-1}\|u\|_{L^2(B_R\setminus B_{r})}\min_{|x|\leq R}e^{\phi(x)}\le\\
C_8(\rho^{-1}\|u\|_{L^2(B_\rho)}\max_{|x|\geq\rho/2}e^{\phi(x)}+r^{-1}
\|u\|_{L^2(B_{r})} \max_{|x|\geq r/2}e^{\phi(x)}).
\end{multline*}
We add $R^{-1}\|u\|_{L^2(B_{r})}$ to both sides and, using the
properties of $\phi$, see (\ref{eq:22}), we obtain
\[
R^{-1}\|u\|_{L^2(B_R)}\le
C_{9}(e^{-\tau\log\frac{\rho}{2b_0R}}\rho^{-1}\|u\|_{L^2(B_\rho)}+
e^{2{H_{0}}\tau\log\frac{2R}{r}}r^{-1}\|u\|_{L^2(B_r)}).\] Now,
denote $\log\frac{\rho}{2b_0R}=m$ and $2{H_{0}}\log\frac{2R}{r}=l$,
we get
\beq
\label{eq:NN2} R^{-1}\|u\|_{L^2(B_R)}\le
C_9(e^{-\tau m  }M+e^{\tau l}\sigma).\
\eeq
Now, let us denote
\[\tau_1=\frac{\log M-\log \sigma}{l+m}.\]
Notice that $e^{-\tau_1 m}M=e^{\tau_1 l}\sigma$. If $\tau_1>\tau_0$,
then we choose $\tau=\tau_1$ in (\ref{eq:NN2})and we get
\[
R^{-1}\|u\|_{L^2(B_R)}\le
2C_{9}M^\alpha\sigma^{1-\alpha}.\]
Otherwise, if $\tau_1\leq\tau_0$ inequality (\ref{eq:N1}) follows trivially.
\end{proof}

\begin{remark}
We note that if $s>n$ and $V\in L^{s/2}(\Omega)$, $W_1, W_2\in L^s(\Omega)$ then the constants in
(\ref{eq:19}) and (\ref{eq:N1}) may be chosen to depend only  on $\lambda, \Lambda_0, \Omega, s,$ and $\|V\|_{L^{s/2}(\Omega)},
\|W_1\|_{L^s(\Omega)}, \|W_2\|_{L^s(\Omega)}$.
\end{remark}

\section{Propagation of smallness}
In this section we prove the main result formulated in the
introduction. First we show how to prolongate the smallness of a
solution to a certain ball and then we apply Theorem \ref{pr:3}.
By normalization we may always assume that $|\Omega|\le
1$.

\subsection{From Doubling and Caccioppoli to Reverse H\"{o}lder and Muckenhoupt}
Throughout this section $u$ is a solution of (\ref{eq:11}) and
$C(u,\Omega,\rho)$ is given by (\ref{eq:19}). The Caccioppoli
inquality (\ref{eq:16}) and the doubling inequality imply
\[
\|\nabla u\|_{L^2(B_r(x))}\le
\tilde{C}_1r^{-1}\|u\|_{L^2(B_{2r}(x))}\le \tilde{C}_1 r^{-1} C(u,
\Omega, \rho)\|u\|_{L^2(B_{r}(x))},
\]
for $r<\kappa\rho$ and $\dist(x,\partial\Omega)>4\rho$,
$\rho<\rho_0$. Then by Sobolev's embedding theorem
\[
\|u\|_{L^{p}(B_{r}(x))}\le \tilde{C}_1
r^{-1}C(u,\Omega,\rho)\|u\|_{L^2(B_{r}(x)  )},\] where
$p=\frac{2n}{n-2}>2$, so we obtain the Reverse H\"{o}lder
inequality, the constant is uniform for $x\in\Omega(\rho)$
provided that $r<\kappa\rho.$

It is well known that Reverse H\"{o}lder's inequality implies the
Muckenhoupt condition for the weight $|u|^2$. Let $F$ be a
measurable subset of a ball $B:=B_r(x)$, where $r<\kappa\rho$ and
$x\in\Omega(\rho)$. We apply the H\"{o}lder inequality and then
the Reverse H\"{o}lder inequality obtained above
\[
\left(\int_{F}|u|^2\right)^{1/2}\le
\left(\int_B|u|^p\right)^{1/p}|F|^{1/n}\le
C_*\left(\int_B|u|^2\right)^{1/2}\left(\frac{|F|}{|B|}\right)^{1/n},
\]
where $C_*=C_{10}C(u,\Omega,\rho)$, once again $C_{10}$ depends only on
$\lambda,$ $\Lambda_0,$ $\Omega,$ $V,$ $W_1, W_2$.

Assume now that $G\subset B$ and
\[
|G|>(1-\alpha)|B|,\] where
\[\alpha=\alpha(u,\Omega,\rho)=2^{-n/2}C_{10}^{-n}C(u,\Omega,\rho)^{-n}.\]
Then  applying the last inequality to $F=B\setminus G$ we get
\beq\label{eq:28} \int_G|u|^2= \int_B|u|^2-\int_F|u|^2\ge
\int_B|u|^2\left(1-C_*^2\alpha^{2/n}\right)\ge
\frac1{2}\int_B|u|^2. \eeq


\subsection{Lemma of Nadirashvili}
Let a cube $Q_0$ be fixed.
We consider all dyadic sub-cubes of $Q_0$. First, $Q_0$ is divided into $2^n$ sub-cubes with the length of the side one half of that of $Q_0$, we denote them $Q^{(l)}, 1\le l\le 2^n,$ and call cubes of rank one.
Each cube $Q^{(l_1,...,l_r)}$ of rank $r$ is divided into $2^n$ sub-cubes of rank $r+1$ that are denoted by $Q^{(l_1,..., l_r, l_{r+1})}$,
$1\le l_{r+1}\le 2^n$. We will also say that  $Q^{(l_1,...,l_r)}$ is the dyadic parent of $Q^{(l_1,..., l_r, l_{r+1})}$. The dyadic parent of $Q_0$ is $Q_0$ itself.

We will use the following statement that can be found in \cite{N1}, the proof is included for the convenience of the reader.

\begin{lemma}
\label{l:squares}
Let $\mathcal{F}$ be a finite family of disjoint dyadic cubes and let $\beta\in(0,1)$. We define $\overline{\mathcal{F}}$ to be the family of maximal dyadic cubes $R$ that satisfy \[|R\cap(\cup_{Q\in{\mathcal{F}}}Q)|>\beta|R|\] and ${\mathcal{F}}_{1}$ to be the family of dyadic parents of the cubes from  $\overline{\mathcal{F}}$. Now let $E=\cup_{Q\in{\mathcal{F}}}Q$ and $E_1=\cup_{Q\in\mathcal{F}_1}Q$.
Then either
\[(i)\ \ |E_{1}|\ge \beta^{-1}|E|\quad {\rm{or}}\]
\[ \quad (ii)\ \ \beta^{-1}|E|>|Q_0|\ {\rm\ and}\ E_{1}=Q_0.\]
\end{lemma}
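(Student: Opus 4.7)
The plan is to observe that the only way the conclusion can fail is for $Q_{0}$ itself to carry most of the $E$-mass, which forces the second alternative. I begin by checking the inclusion $E\subset E_{1}$: every $Q\in\mathcal{F}$ satisfies $|Q\cap E|=|Q|>\beta|Q|$ since $Q\subset E$, so $Q$ sits inside a maximal dyadic cube $R\in\overline{\mathcal{F}}$ — namely the largest dyadic ancestor of $Q$ that still satisfies the defining inequality, which exists because the chain of dyadic ancestors terminates at $Q_{0}$. Each such $R$ is contained in its dyadic parent, which lies in $\mathcal{F}_{1}$, so $E\subset\bigcup_{R\in\overline{\mathcal{F}}}R\subset E_{1}$.

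Next I split on whether $Q_{0}\in\overline{\mathcal{F}}$. If yes, then $|E|=|Q_{0}\cap E|>\beta|Q_{0}|$, so $\beta^{-1}|E|>|Q_{0}|$; moreover, by the convention that $Q_{0}$ is its own dyadic parent, $Q_{0}\in\mathcal{F}_{1}$ and hence $E_{1}=Q_{0}$. This is alternative (ii).

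Now suppose $Q_{0}\notin\overline{\mathcal{F}}$. Then every $R\in\overline{\mathcal{F}}$ is a proper dyadic sub-cube of $Q_{0}$, and its parent $P(R)\supsetneq R$ fails the defining inequality by the maximality of $R$, giving $|P(R)\cap E|\leq\beta|P(R)|$. The cubes in $\mathcal{F}_{1}$ need not be disjoint — parents of distinct members of $\overline{\mathcal{F}}$ can coincide or be strictly nested — so I pass to the subfamily $\overline{\mathcal{F}_{1}}$ consisting of the maximal members of $\mathcal{F}_{1}$, which form an antichain in the dyadic tree, are hence mutually disjoint, and satisfy $\bigcup_{P\in\overline{\mathcal{F}_{1}}}P=E_{1}$. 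Each such maximal $P$ is still the parent of some $R\in\overline{\mathcal{F}}$, so the estimate $|P\cap E|\leq\beta|P|$ persists. Combining $E\subset E_{1}$ with the disjointness of $\overline{\mathcal{F}_{1}}$ yields
\[
|E|=|E\cap E_{1}|=\sum_{P\in\overline{\mathcal{F}_{1}}}|E\cap P|\leq\beta\sum_{P\in\overline{\mathcal{F}_{1}}}|P|=\beta|E_{1}|,
\]
which is alternative (i).

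There is no deep obstacle here; the only subtlety is the potential non-disjointness of $\mathcal{F}_{1}$, which is handled cleanly by passing to its maximal elements. The rest of the argument is bookkeeping from the definition of maximality, together with the observation that $E$ is already swept inside $E_{1}$.
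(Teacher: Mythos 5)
Your proof is correct, but it is organized quite differently from the paper's. The paper proves the lemma by induction on the rank of the smallest cube in $\mathcal{F}$: it distinguishes whether $Q_0$ or one of its $2^n$ children belongs to $\overline{\mathcal{F}}$, and in the remaining case recurses into each child $Q^{(l)}$, summing the inequalities $|E_1^{(l)}|\ge\beta^{-1}|E^{(l)}|$ supplied by the induction hypothesis. You instead give a direct, global stopping-time argument: first $E\subset E_1$ because every $Q\in\mathcal{F}$ trivially satisfies the density condition and hence sits inside some maximal $R\in\overline{\mathcal{F}}$; then, when $Q_0\notin\overline{\mathcal{F}}$, each such $R$ is a proper subcube, so by maximality its parent $P(R)$ fails the density condition, $|P(R)\cap E|\le\beta|P(R)|$, and after discarding non-maximal parents you get a disjoint cover of $E_1$ on which summing gives $|E|\le\beta|E_1|$. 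The two arguments have the same combinatorial content, but yours avoids the induction bookkeeping entirely, and in particular sidesteps the (unstated) point in the paper's case (C) that maximality relative to the subtree rooted at $Q^{(l)}$ coincides with maximality in the full tree when neither $Q_0$ nor $Q^{(l)}$ is in $\overline{\mathcal{F}}$. The one genuine subtlety in your route --- that $\mathcal{F}_1$ need not be disjoint --- you identify and resolve correctly by passing to its maximal elements, using that dyadic cubes are nested or disjoint. The paper's recursive formulation arguably mirrors more closely how the lemma is later iterated (the passage from $\mathcal{Q}_j$ to $\mathcal{Q}_{j+1}$), but as a proof of the lemma itself your version is cleaner.
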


\begin{proof}
We prove this statement using induction on the rank of the smallest cube in $\mathcal{F}$.
If $Q_0\in\mathcal{F}$ then $(ii)$ holds. Otherwise we divide $\mathcal{F}$ into $2^n$ subfamilies
$\mathcal{F}^{(l)}=\{Q\in\mathcal{F}: Q\subset Q^{(l)}\}$.

We have one of the following cases:\\
(A) $Q_0\in\overline{\mathcal{F}}$,\\
(B) $Q_0\not\in\overline{\mathcal{F}}$ but $Q^{(l)}\in\overline{\mathcal{F}}$ for some $l$, or\\
(C) $Q_0\not\in\overline{\mathcal{F}}$ and $Q^{(l)}\not\in\overline{\mathcal{F}}$ for each $l$.

\medskip

\noindent For (A) we have $|\cup_{Q\in\mathcal{F}}Q|>\beta|Q_0|$ and $(ii)$ holds.

\medskip

\noindent For (B) we have $Q_0\in \mathcal{F}_1$  and $E_1=Q_0$. At the same time   $|Q_0|\ge\beta^{-1}|E|$ since $Q_0\not\in\overline{\mathcal{F}}$  and $(i)$ holds.

\medskip

\noindent For (C) we have by the induction hypothesis the statement is true for each family $\mathcal{F}^{(l)}$ and $E^{(l)}=\cup_{Q\in\mathcal{F}^{(l)}}Q$ if we replace $Q_0$ by $Q^{(l)}$. We see that $(ii)$ does not hold for $E^{(l)}$ since $Q^{(l)}\not\in\overline{\mathcal{F}}$ , i.e. $\beta^{-1}|E^{(l)}|\le|Q^{(l)}|$. Thus  $(i)$ holds for each $l$ and
\[|E_1|=\sum_l|E_1^{(l)}|\ge \beta^{-1}\sum_l|E^{(l)}|=\beta^{-1}|E|,\]
where $E_1^{(l)}=E_1\cup Q^{(l)}$.
\end{proof}
\begin{remark}
Note that \textit{(i)} may be true even if $E_1=Q_0$.
\end{remark}

\subsection{From a set of positive measure to a ball}
\label{s:mb} A cube $R$ is called $\delta$-good for a function
$v\in L^2(R)$ if
\[
\int_R|v|^2\le \delta|R|.\]

\begin{proposition}
\label{pr:4} Suppose that the coefficients of (\ref{eq:11}) satisfy (\ref{eq:00}-\ref{eq:01}).
Let $E$ be a compact measurable subset of
$\Omega(\rho_1)$, $\rho_1<\rho_0$, and let $u$ be a solution of
(\ref{eq:11}). Assume that $\|u\|^2_{L^2(E)}\le\epsilon^2|E|$.
Then there exists a cube $Q_0\subset\Omega$ with side length
$r_1=\kappa \rho_1$ and a finite set $\{Q_j\}$ of  dyadic
sub-cubes of $Q_0$ such that
\[|(\cup Q_j)\cap E|>c(\Omega,\rho_1)|E|\]
 and each $Q_j$ is $D\epsilon^2$-good for $u$, where
\beq \label{eq:41} D=a_nC(u,\Omega,\rho_1)^{\gamma_n} \eeq and
$a_n,\gamma_n$ depend only on the dimension $n$.
\end{proposition}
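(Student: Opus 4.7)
The plan is to reduce to a single cube $Q_0$ of side $r_1 = \kappa\rho_1$ containing a fixed fraction of $E$, cover the trace $E' := Q_0 \cap E$ by maximal dyadic sub-cubes on which the $E'$-density exceeds $1-\alpha$ (so that the Muckenhoupt inequality (\ref{eq:28}) from Section 4.1 applies), and prune via a Chebyshev argument combined with Nadirashvili's lemma to isolate cubes of controlled $L^2$-average.

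First, I would cover $\Omega(\rho_1)$ by finitely many cubes of side $r_1$ whose doubles lie in $\Omega$, which is possible since $\rho_1 < \rho^*$. By pigeonhole, some such cube $Q_0$ satisfies $|Q_0 \cap E| \geq |E|/N(\Omega,\rho_1)$. Letting $\alpha = \alpha(u,\Omega,\rho_1)$ be the Muckenhoupt parameter from the derivation of (\ref{eq:28}), let $\mathcal{F}_0$ be the (disjoint) family of maximal dyadic sub-cubes $Q$ of $Q_0$ with $|Q \cap E'|/|Q| > 1-\alpha$. By Lebesgue's density theorem applied to the compact set $E'$, the members of $\mathcal{F}_0$ cover $E'$ up to a null set. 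On each $Q \in \mathcal{F}_0$, inequality (\ref{eq:28}) gives $\int_Q|u|^2 \leq 2\int_{Q \cap E'}|u|^2$, and summing over the disjoint family yields
\[
\sum_{Q \in \mathcal{F}_0}\int_Q|u|^2 \;\leq\; 2\int_{E'}|u|^2 \;\leq\; 2\epsilon^2|E|.
\]
Chebyshev's inequality then bounds the total area of the bad subcollection $\{Q \in \mathcal{F}_0 : \int_Q|u|^2 > D\epsilon^2|Q|\}$ by $2|E|/D$; for $D$ chosen sufficiently large, the remaining good cubes still cover at least a fixed fraction of $E$, and by compactness I can extract a finite subfamily as the desired $\{Q_j\}$.

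The main obstacle is achieving the precise form $D = a_n C(u,\Omega,\rho_1)^{\gamma_n}$ advertised in the statement, since the naive Chebyshev pruning produces a $D$ that depends on the pigeonhole count $N(\Omega,\rho_1)$ rather than on the doubling constant. To fix this, I would apply Nadirashvili's lemma (Lemma \ref{l:squares}) iteratively to coarsen $\mathcal{F}_0$ to a larger dyadic scale: at each step a cube $R$ in the coarsened family contains a $\beta$-dense child $R'$ to which Muckenhoupt still applies, and the doubling inequality (Proposition \ref{pr:1}) propagates the $L^2$-bound from $R'$ to $R$ with a multiplicative factor bounded by a fixed power of $C(u,\Omega,\rho_1)$. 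The delicate point is to balance the geometric growth $|E_{k+1}| \geq \beta^{-1}|E_k|$ from the Nadirashvili dichotomy against the multiplicative doubling loss at each step so that the iteration terminates at a dimensionally bounded number of steps while the resulting cubes still cover a positive fraction of $E$; this balance produces the exponent $\gamma_n$.
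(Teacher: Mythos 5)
Your overall architecture (locate a cube $Q_0$ carrying a fixed fraction of $E$, take maximal dyadic sub-cubes of high $E$-density, apply the Muckenhoupt inequality (\ref{eq:28})) matches the paper, but the step where you actually produce the $D\epsilon^2$-good cubes has a gap that you notice but do not repair. You run Chebyshev at the level of the cube family: summing $\int_Q|u|^2\le 2\int_{Q\cap E'}|u|^2$ over $\mathcal F_0$ and discarding the cubes where $\int_Q|u|^2>D\epsilon^2|Q|$. As you observe, this forces $D\gtrsim N(\Omega,\rho_1)$ just to keep the surviving cubes covering a fixed fraction of $E$, which is incompatible with the advertised form $D=a_nC(u,\Omega,\rho_1)^{\gamma_n}$ with $a_n$ purely dimensional. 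Your proposed repair --- iterating Lemma \ref{l:squares} to coarsen $\mathcal F_0$ --- does not work: that lemma is the engine of the \emph{subsequent} propagation step (the passage from $\{Q_j\}$ to $Q_0$ in (\ref{eq:44})), and each coarsening step multiplies the $L^2$-average bound by another factor of $D$ rather than improving the form of $D$; there is no "dimensionally bounded number of steps" at which the two effects balance.

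The paper's fix is much simpler and happens \emph{before} any cubes appear: apply Chebyshev once, pointwise, to pass from the hypothesis $\|u\|_{L^2(E)}^2\le\epsilon^2|E|$ to the sublevel set $E_1=\{x\in E:|u(x)|\le\sqrt2\,\epsilon\}$, which satisfies $|E_1|\ge|E|/2$. One then builds the $\beta$-filled cubes relative to $E_1$. Since $|u|\le\sqrt2\,\epsilon$ pointwise on $E_1$, the chain
\[
\int_{Q_j}|u|^2\le\int_{\sqrt n B_j}|u|^2\le C(u,\Omega,\rho_1)^{1+\log n}\int_{B_j}|u|^2\le 2C(u,\Omega,\rho_1)^{1+\log n}\int_{B_j\cap E_1}|u|^2\le D\epsilon^2|Q_j|
\]
holds for \emph{every} maximal $\beta$-filled cube --- no pruning is needed, and $D$ comes out as a dimensional constant times a power of the doubling constant. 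Two smaller points you should also attend to: (\ref{eq:28}) is proved for balls, so you must pass from the density-$(1-\alpha)$ condition on the cube to the inscribed ball $B_j$ (which costs a factor $c_n$ in the threshold, i.e.\ $\beta=1-kC(u,\Omega,\rho_1)^{-n}$) and then return from $B_j$ to $Q_j$ via the doubling inequality on $\sqrt n B_j$; your sketch applies the ball inequality directly to cubes.
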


\begin{proof} We consider the set
\[E_1=\{x\in E: |u(x)|\le \sqrt{2}\epsilon\}.\]
Clearly, $|E_1|\ge |E|/2$. We may cover $E_1$ by finitely many
cubes with side-length $r_1$ and distance to the boundary of
$\Omega$ greater than $\rho_1$. We choose one of those $Q_0$ such
that $|E_1\cap Q_0|>c(\Omega,\rho_1)|E|$.

A dyadic sub-cube $K$ of $Q_0$ is called $\beta$-filled if
$|K\cap E_1|>\beta|K|$. Consider the set $\{Q_j\}$ of maximal
$\beta$-filled cubes (those $\beta$-filled cubes that are not
contained in any bigger $\beta$-filled cube). Since almost each
point of $E_1\cap Q_0$ is its point of density, we know that
$|(E_1\cap Q_0)\setminus\cup_j Q_j|=0$. Thus we can take finitely
many of cubes $\{Q_j\}$ such that
\[|\cup_j Q_j\cap E|>|\cup_j Q_j\cap E_1|>\frac{1}{2}|E\cap Q_0|>\frac{1}{2}c(\Omega,\rho_1)|E|\] and
$|Q_j\cap E_1|>\beta|Q_j|.$

Note that $\rho_1$ is fixed and let
$\alpha(u)=\alpha(u,\Omega,\rho_1)$, we can choose
$\beta=\beta(u)$ such that the last inequality implies
\[|B_j\cap E_1|>(1-\alpha)|B_j|,\]
for the ball $B_j$ inscribed in $Q_j$, i.e., $B_j$ has the same
center as $Q_j$ and radius $l_j/2$, where $l_j$ is the side length
of $Q_j$. Indeed
\[|B_j\cap E_1|\ge |B_j|-|Q_j\setminus E_1|\ge |B_j|-(1-\beta)|Q_j|=
|B_j|(1-(1-\beta)c_n).\] Thus $|B_j\cap E_1|\ge
(1-kc_nC(u,\Omega,\rho_1)^{-n})|B_j|\ge (1-\alpha)|B_j|$ if \beq \label{eq:43}
\beta=1-kC(u,\Omega,\rho_1)^{-n}, \eeq  where
$k<k_0$, $k_0$  does not depend on $u$ but depends on the
coefficients of the differential operator  and on $\Omega$. Then,
using (\ref{eq:20}) and (\ref{eq:28}), we get
\begin{multline*}
\int_{Q_j}|u|^2\le \int_{\sqrt{n}B_j}|u|^2\le
C(u,\Omega,\rho_1)^{1+\log n}\int_{B_j}|u|^2\le\\
2C(u,\Omega,\rho_1)^{1+\log n}\int_{B_j\cap E_1}|u|^2\le
2C(u,\Omega,\rho_1)^{1+\log n}|B_j\cap E_1|2\epsilon^2\le
D\epsilon^2|Q_j|.
\end{multline*}
\end{proof}

Our aim is to estimate $ \int_{Q_0}|u|^2,$ where $Q_0$
is the cube from the last proposition, so $u$ and $Q_0$ are fixed,
we consider dyadic sub-cubes of $Q_0$.
Let $D$ and $\beta<1$ be defined by (\ref{eq:41}) and (\ref{eq:43}).
We note that
\begin{itemize}
\item if $R$ is $\delta$-good then its dyadic parent is $D\delta$
good (by the doubling inequality (\ref{eq:20})), \item  if
$\{R_j\}$ are disjoint $\delta$-good cubes and $|R\cap(\cup
R_j)|>\beta|R|$ then $R$ is $D\delta$-good; this follows from
(\ref{eq:28}) and (\ref{eq:20}).
\end{itemize}

Let ${\mathcal{Q}}_1$ be the family of cubes $Q_j$ obtained in
Proposition \ref{pr:4}. We define by induction
$\overline{\mathcal{Q}}_j$ to be the family of maximal dyadic
cubes $R$ that satisfy
$|R\cap(\cup_{Q\in{\mathcal{Q}}_j}Q)|>\beta|R|$ and
${\mathcal{Q}}_{j+1}$ to be the family of dyadic parents of the
cubes from  $\overline{\mathcal{Q}}_j$.  Then by induction all
cubes from ${\mathcal{Q}}_j$ are $D^{2j-1}\epsilon^2$ good.

Let $E_j= \cup_{Q\in{\mathcal{Q}}_{j}}Q$. By Lemma \ref{l:squares} we have
\[{\rm{either}}\quad (i)\ \ |E_{j}|\ge \beta^{-j+1}|E_1|\quad {\rm{or}} \quad (ii)\ \ E_{j}=Q_0.\]
By Proposition \ref{pr:4}, $|E_1|\ge c(\Omega,\rho_1)|E|$.
Therefore, taking (\ref{eq:43}) into account, we see that there
exists \beq \label{eq:33} N=N(|E|,\rho_1,
P,V,W_1,W_2)C(u,\Omega,\rho_1)^n=N_0C(u,\Omega,\rho_1)^n \eeq such
that $E_N=Q_0$, where $N_0$ depends only on $|E|$ on $\rho_1$ and on
$\Omega$. Thus \beq\label{eq:44} \int_{Q_0}|u|^2\le
\left(a_nC(u,\Omega,\rho_1)^{\gamma_n}\right)^N\epsilon^2.\eeq

Now we can prove the following statement

\begin{proposition}
\label{pr:2} Assume that the equation (\ref{eq:11}) is given and its coefficients satisfy
(\ref{eq:00}-\ref{eq:01})  and
let $\sigma, m,\rho_1$ be positive, $\sigma<
1/n$. There exists
$\epsilon_0=\epsilon_0(\sigma,m,\rho_1)$ such that
the following holds:\\
If $E$ is a measurable subset of $\Omega(\rho_1)$, $|E|\ge m$, and
$u$ is a solution of (\ref{eq:11}) that satisfies
\[\|u\|_{L^2(E)}|E|^{-1/2}\le\epsilon<\epsilon_0\ {\text{and}}\ \|u\|_{L^2(\Omega)}\le 1\]
then there exists a ball $B_0$ of radius $r_1=\kappa\rho_1$ and
center in $\Omega(\rho_1)$ such that

\beq \label{eq:46} \int_{B_0}|u|^2\le
\left(|\log{\epsilon}|\right) ^{-2\sigma/{H_{1}}}, \eeq where
$H_{1}$ is as in Proposition \ref{pr:1}.
\end{proposition}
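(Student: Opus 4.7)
The plan is a dichotomy based on the size of the doubling constant $C := C(u,\Omega,\rho_1)$ defined in (\ref{eq:19}). I would choose the threshold
\[
T := C_0\,|\log\epsilon|^{\sigma},
\]
where $C_0$ is the constant appearing in (\ref{eq:19}), and split into two cases according to whether $C\leq T$ or $C>T$.

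\emph{Case A ($C\le T$).} I apply estimate (\ref{eq:44}), which together with (\ref{eq:33}) gives
\[
\int_{Q_0}|u|^2 \le (a_n C^{\gamma_n})^{N_0 C^n}\,\epsilon^2.
\]
Taking logarithms and using $C\le C_0|\log\epsilon|^{\sigma}$, the exponent is bounded by
\[
N_0 C_0^n\,|\log\epsilon|^{n\sigma}\,\bigl(\gamma_n\sigma\log|\log\epsilon|+O(1)\bigr) + 2\log\epsilon.
\]
Because $\sigma<1/n$, we have $n\sigma<1$, so $|\log\epsilon|^{n\sigma}\log|\log\epsilon|=o(|\log\epsilon|)$ and the term $2\log\epsilon$ dominates. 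For $\epsilon$ small enough (this is where $\epsilon_0=\epsilon_0(\sigma,m,\rho_1)$ enters), the bound comfortably beats $(|\log\epsilon|)^{-2\sigma/H_1}$. A ball $B_0$ of radius $r_1$ centered at the center of $Q_0$ can then be absorbed by a bounded number of applications of the doubling inequality (\ref{eq:20}), since $C\le T$ is itself only polylogarithmic in $\epsilon$.

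\emph{Case B ($C>T$).} By definition (\ref{eq:19}) there is a point $\bar x$ (in the relevant set over which the maximum is taken) such that
\[
\frac{\|u\|_{L^2(B_{\rho_1}(\bar x))}}{\|u\|_{L^2(B_{2\kappa\rho_1}(\bar x))}} \ge (T/C_0)^{1/H_1} = |\log\epsilon|^{\sigma/H_1}.
\]
Combined with $\|u\|_{L^2(B_{\rho_1}(\bar x))}\le\|u\|_{L^2(\Omega)}\le 1$, this yields immediately
\[
\|u\|_{L^2(B_{2\kappa\rho_1}(\bar x))}^2 \le |\log\epsilon|^{-2\sigma/H_1},
\]
and I take $B_0$ to be the ball of radius $r_1=\kappa\rho_1$ concentric with $B_{2\kappa\rho_1}(\bar x)$.

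The principal obstacle is the simultaneous matching of both cases to the same target $(|\log\epsilon|)^{-2\sigma/H_1}$; Case A decays like a power of $\epsilon$ but only when $C$ (and hence $N=N_0C^n$) is not too large, while Case B produces precisely the desired polylogarithmic decay. The threshold $T=C_0|\log\epsilon|^{\sigma}$ makes the two regimes meet, and the hypothesis $\sigma<1/n$ is dictated exactly by the requirement $C^n\log C\ll|\log\epsilon|$ in Case A. A secondary bookkeeping nuisance is that the maximum in (\ref{eq:19}) ranges over $\Omega(\rho_1/2)$ rather than $\Omega(\rho_1)$; I would sidestep this by running the entire argument at scale $2\rho_1$ in place of $\rho_1$, which only rescales the constants $C_0,H_1$ and the threshold $T$.
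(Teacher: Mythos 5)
Your proposal is correct and follows essentially the same route as the paper: the same dichotomy on the doubling constant $C(u,\Omega,\rho_1)$ with the same threshold $C_0|\log\epsilon|^{\sigma}$, with Case B reading the polylogarithmic bound directly off the definition (\ref{eq:19}) and Case A combining (\ref{eq:44}) and (\ref{eq:33}) with $n\sigma<1$ exactly as in the paper's computation. The only difference is that you explicitly flag (and patch) the mismatch between $\Omega(\rho_1/2)$ in (\ref{eq:19}) and $\Omega(\rho_1)$ in the statement, and that the paper takes $B_0$ inscribed in $Q_0$ (then doubles once) rather than centered at $Q_0$ with radius $r_1$; both are cosmetic.
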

\begin{proof}
By the definition of  $C(u,\Omega,\rho_1)$, there exists a ball of
radius $r_1$ such that \beq \label{eq:45} \int_B|u|^2\le M^2\left(
(C_0C^{-1}(u,\Omega,\rho_1))\right)^{2/{H_{1}}},\eeq where
$M=\|u\|_{L^2(\Omega)}\le 1$.

If $C(u,\Omega,\rho_1)\ge C_0|\log\epsilon|^{\sigma}$ then
(\ref{eq:45}) implies the desired estimate. Otherwise we use the
ball $B_0$  inscribed in the cube $Q_0$ in (\ref{eq:44}) and from
(\ref{eq:33}) and (\ref{eq:20}) we obtain
\begin{multline*}
\int_{B_0}|u|^2\le \exp\left(2\log\epsilon+(N+1)\log (a_n C_o^{\gamma_n})+(N+1)\gamma_n\sigma\log|\log\epsilon|\right)\\
\le \exp\left(-2|\log\epsilon|+
2N_0C_0^n|\log\epsilon|^{n\sigma}(\log
(a_nC_0^{\gamma_n})+\gamma_n\sigma\log|\log\epsilon|)\right),
\end{multline*}
then (\ref{eq:46}) follows for $\epsilon$ small enough since
$n\sigma<1$.
\end{proof}
\begin{remark} The statement of Proposition implies also that there exists $A=A(\sigma, m, \rho_1)$ such that
if $E$ is a compact measurable subset of $\Omega(\rho_1)$,
$|E|>m$, and  $u$ is a solution of (\ref{eq:11}) that satisfies
$\|u\|_{L^2(E)}|E|^{-1/2}\le\epsilon<1$ and
$\|u\|_{L^2(\Omega)}\le 1$ then there exists a ball $B_0$ of
radius $r_1=\kappa\rho_1$ and center in $\Omega(\rho_1)$ such that
\[ 
\int_{B_0}|u|^2\le
A|\log{\epsilon}|)^{-2\sigma/H_1}. \]
\end{remark}

\subsection{Proof of the Main result}
In this section we prove Theorem \ref{th:m1} formulated in the
introduction. We use standard argument of smallness propagation, see \cite{ARRV}.
\begin{proof}
First, we may assume that $\rho<\rho_0$, then we cover
$\Omega(\rho)$ by finitely many balls of radii $r=\kappa\rho$ and
with centers in $\Omega(\rho)$. It is enough to prove a similar
inequality for $L^2$-norm of $u$ over each of those balls. Now we
refer to Proposition \ref{pr:2} to find one ball $B_0=B(x_0,r)$
with desired estimate and $c=\sigma/H_1$ and $C$ that depends on $|E|$
and $\rho$, we note that $r$ depends only on $\rho$. Using Theorem
\ref{pr:3} we can obtain an estimate for the norm of $u$ in
$2B_0=B(x_0,2r)$ and then in a ball of radius $r$ with center
$x_1$ such that $|x_0-x_1|=r$. By Theorem \ref{pr:3} we get
\[
\left\Vert u\right\Vert _{L^{2}(B_{1})}\leq C|\log\epsilon|^{-c}
.\]
Where $C$ and $c$ do not depend on $u$. 
\end{proof}
\begin{remark}
We choose to work with $L^2$-norms since the solutions we consider
include unbounded functions (see Section 2 of Introduction in
\cite{LU} for corresponding examples and general discussions). If
we assume that $V\in L^t(\Omega), t>n/2$, then $L^2$ inequalities
and elliptic estimates yield $L^\infty$-results (see \cite[chapter
III, \S 13]{LU}).
\end{remark}



\section{ On a theorem of Nadirashvili}

 We prove Theorem \ref{th:m2} in
this section. The proof follows the argument of N. Nadirashvili
\cite{N} (in the way we understand it). Our version of the proof
differs from the original in some technical details, it is
adjusted to our assumptions on coefficients.  For example we use
elliptic estimate in the place of the growth lemma of Landis, which
appeared in the original proof, we also apply the three sphere
inequality obtained in Section \ref{s:th} of this work.

\subsection{First reduction}
Once again we consider elliptic equations of the form
(\ref{eq:11}) such that the main term satisfies inequalities (\ref
{eq:00}), (\ref{eq:02}), and (\ref{eq:01n}) holds for the lower
order terms. The statement of the Theorem \ref{th:m2} follows from
the lemma below.

\begin{lemma}\label{l:N1}
Let $P=\di(g\nabla u)$, where $g(x)=\{g^{ij}(x)\}_{i,j=1}^{n}$ is
a real-valued symmetric matrix satisfying (\ref{eq:00}) and
(\ref{eq:02}). Let $s>n$ and assume that $\rho>0$ and $V$, $W_1$
and $W_2$ satisfy
\[
\|V\|_{L^{s/2}(\Omega)}, \|W_1\|_{L^{s}(\Omega)},\|W_2\|_{L^{s}(\Omega)}\le \Lambda_1.
\]
 Then there exist positive numbers $\delta$ and $c$ that depend on $\Omega, \lambda, \Lambda_0, \Lambda_1$ and $\rho$,
such that if
\beq \label{eq:11n}
Pu=Vu+W_1\cdot\nabla u+\nabla\cdot(W_2u), \quad {\rm{in}}\quad
\Omega,\eeq
$\|u\|_{L^\infty(\Omega)}\le 1$, $E$ is a measurable subset of
$B_{r/2}(x),\ |E|>(1-\delta)|B_{r/2}$, where $B_r(x)\subset\Omega(\rho), r\le 1$ and $\|u\|_{L^{\infty}(E)}\le \epsilon$,
$\epsilon\in(0,1/2)$ then
\[
\|u\|_{L^\infty(B_{r/2}(x))}\le \exp(-c|\log \epsilon|^{\alpha}),
\]
where $\alpha<1$ and depends on the dimension of the space, $\Omega, \lambda, \Lambda_0,$ and $\Lambda_1$.
\end{lemma}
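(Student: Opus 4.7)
The plan is to follow Nadirashvili's strategy from \cite{N}, using Theorem \ref{pr:3} as the propagation engine and the interior elliptic $L^\infty$ estimates that are available under the strengthened hypothesis (\ref{eq:01n}). First I would pass from the pointwise smallness on $E$ to $L^2$-smallness on the whole of $B_{r/2}(x)$: splitting the integral and using $\|u\|_{L^\infty(\Omega)}\le 1$,
\[
\|u\|_{L^2(B_{r/2}(x))}^2 \le \epsilon^2|E|+|B_{r/2}(x)\setminus E| \le (\epsilon^2+\delta)|B_{r/2}(x)|.
\]
A pigeonhole/covering argument on a partition of $B_{r/2}(x)$ into sub-balls of radius $r_0$ produces a sub-ball $B_0=B_{r_0}(y_0)\subset B_{r/2}(x)$ whose normalized $L^2$-norm of $u$ is at most $C(\epsilon+\sqrt{\delta})$. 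Using the interior elliptic $L^\infty$ bounds (which is exactly why the stronger hypothesis $V\in L^{s/2}$ with $s>n$ is imposed in Theorem \ref{th:m2}, cf.\ the remark at the end of Section~4) upgrades this to an $L^\infty$ bound on $B_{r_0/2}(y_0)$.

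Next I would iterate Theorem \ref{pr:3} to propagate this smallness from $B_0$ out to all of $B_{r/2}(x)$. Choose concentric balls $B_0\subset B^{(1)}\subset\cdots\subset B^{(N)}\supset B_{r/2}(x)$, all sitting well inside $B_r(x)\subset\Omega(\rho)$, with geometrically increasing radii and a fixed ambient ball. At each step Theorem \ref{pr:3} bounds the $L^2$-norm on $B^{(k+1)}$ by a power $\alpha_k\in(0,1)$ of the one on $B^{(k)}$ times a bounded ambient factor, and the elliptic estimates convert these $L^2$ bounds to $L^\infty$ bounds. A single-scale iteration of this sort yields only a polynomial bound $\|u\|_{L^\infty(B_{r/2}(x))}\le C\epsilon^c$. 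The sub-exponential improvement to $\exp(-c|\log\epsilon|^\alpha)$, $\alpha<1$, comes from Nadirashvili's refinement: one lets the initial radius $r_0=r_0(\epsilon)$ depend on $\epsilon$, so that the length of the iteration chain grows like $\log(r/r_0)\sim\log|\log\epsilon|$. Balancing the cost per step (each $\alpha_k$ slightly below $1$) against the depth of the chain produces the promised exponent $\alpha<1$.

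The main obstacle is the simultaneous control of the smallness parameter and the geometric scale along the iteration. Too small an $r_0$ requires too many three-sphere applications, each depressing the exponent by a factor $\alpha_k<1$; too large an $r_0$ cannot be extracted from the measure hypothesis. The choice of $\delta$ must be made small enough, depending only on the coefficients, $\Omega$, and $\rho$, to guarantee that at every stage of the chain one can find sub-balls with large relative intersection with the set where $|u|$ is small, and that the concentric balls used to apply Theorem \ref{pr:3} stay inside $\Omega(\rho)$ where the constants depend only on the data. The quantitative bookkeeping of how the measure of the \emph{good} set grows from step to step, and how this feeds back into the permitted choice of $r_0(\epsilon)$, is the delicate part that is genuinely due to \cite{N}; everything else is standard given Theorem \ref{pr:3} and interior elliptic regularity.
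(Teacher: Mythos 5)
There is a genuine gap at the very first step, and it is fatal to the whole scheme. From the hypotheses you can only conclude
\[
\|u\|_{L^2(B_{r/2}(x))}^2\le \epsilon^2|E|+|B_{r/2}(x)\setminus E|\le(\epsilon^2+\delta)|B_{r/2}(x)|,
\]
and a pigeonhole over a partition into sub-balls of radius $r_0$ cannot do better than a normalized bound $C(\epsilon+\sqrt{\delta})$ on some sub-ball, exactly as you write. But $\delta$ is a \emph{fixed} constant depending only on $\Omega,\lambda,\Lambda_0,\Lambda_1,\rho$ (it cannot depend on $\epsilon$, by the statement of the lemma), so your starting smallness is $O(\sqrt{\delta})$, not $O(\epsilon)$. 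Every subsequent three-sphere step only raises this constant to powers $\alpha_k\in(0,1)$, so the final bound is a fixed constant and never decays as $\epsilon\to 0$. The measure hypothesis by itself gives no $\epsilon$-dependent scale and no ball on which $|u|$ is actually of size $\epsilon$: the set $E$ need not contain any ball, and this is precisely the difficulty the lemma is about. Your remark that $r_0=r_0(\epsilon)$ ``can be extracted from the measure hypothesis'' is the step that is missing, and it is the entire content of Nadirashvili's argument.

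What the paper does instead is: (a) a compactness reduction (rescale to $B_1$ and intersect the hypothetical counterexample sets) so that one works with a single fixed set $F_0\subset B_{1/2}$ of measure $\ge\frac12|B_{1/2}|$, which is how the constants become uniform in $E$ --- your proposal does not address this either; (b) a choice of a density point $x_0$ of $F_0$ at which the Marcinkiewicz integral $\int_{|y|\le 1}\dist(x_0+y,F_1)|y|^{-n-1}\,dy$ converges, giving quantitative control of $h(r)=\max_{|y|=r}\dist(x_0+y,F_1)$ on most dyadic scales; (c) a growth lemma (the Claim, derived from the interior elliptic estimate): whenever $\max_{|x-x_0|=r}|u|=m(r)>2A\epsilon$, the maximum must increase by a factor $(1+\beta)$ within a shell of width $3h(r)$. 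Counting the resulting growth steps over the good dyadic scales shows that $|u|\le 2A\epsilon$ on a genuine ball $B_{r_0}(x_0)$ with $r_0\ge\exp(-B|\log\epsilon|^{(n+1)/(n+2)})$. Only then is Theorem \ref{pr:3} invoked --- essentially once, with this exponentially small inner radius, so that $1-\alpha\sim|\log r_0|^{-1}$ and $\sigma^{1-\alpha}\sim\exp(-c|\log\epsilon|^{1/(n+2)})$ --- rather than through a chain of $\log|\log\epsilon|$ fixed-geometry steps as you suggest. Without ingredient (c) (or some substitute producing $\epsilon$-smallness on an actual ball of quantified radius), the proof cannot be completed along the lines you propose.
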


We want to show that Lemma \ref{l:N1} implies Theorem \ref{th:m2}.
The three sphere inequality (\ref{eq:N1}) for $L^2$-norms
implies similar inequality for $L^\infty$ norms since
we assume that $V\in L^s(\Omega)$ and $s>n/2$. Then we obtain the following version of Lemma \ref{l:N1} for cubes:\\
{\it{There exist positive numbers $\delta_1$ and $c_1$  that
depend on $\Omega, \lambda, \Lambda_0, \Lambda_1$ and $\rho$ and $r_0$, such
that if (\ref{eq:11n}) holds, $\|u\|_{L^\infty(\Omega)}\le 1$, $E$
is a measurable subset of $Q_{r}(x),\ |E|>(1-\delta_1)|Q_{r}(x)$,
where $Q_{c_nr}(x)\subset\Omega(\rho), r\le r_0$ and
$\|u\|_{L^{\infty}(E)}\le \epsilon$, $\epsilon\in(0,1/2)$ then
\[
\|u\|_{L^\infty(Q_{r}(x))}\le \exp(-c_1|\log \epsilon|^{\alpha}),
\]
where $Q_t(x)$ is a cube with side length $t$ and center $x$.}}

Now we  find a cube $Q_0\subset \Omega(\rho)$ with side length $l=l(\Omega, \rho)$ and a finite collection of  its disjoint dyadic sub-cubes $Q_j=Q_{r_j}(x_j)$ such that
$|E\cap Q_j|>(1-\delta)|E|$ and $|E\cap(\cup_j Q_j|>a|E|$, where $a=a(\rho,\Omega,n)$. Using Lemma \ref{l:N1}, three sphere
inequality and Lemma \ref{l:squares} we conclude that there exists $c_0, C_0$ and $\mu_0$ that depend on
 $\Omega, \lambda, \Lambda_0, \Lambda_1, |E|$ and $\rho$,  such that
\[\|u\|_{L^\infty(Q_0)}\le C_0\exp(-c_0|\log\epsilon|^{\mu_0}).\]
By applying three sphere inequality once again we obtain (\ref{eq:50n}).

\subsection{Second reduction}
We shall formulate another statement that implies Lemma
\ref{l:N1}. Assume that Lemma is false, then for any $k\in\N
\cup\{0\}$ we can find $E_k\subset B_{r_k/2}(x_k)$ and $u_k$ such
that $u_k$ satisfies (\ref{eq:11n}),
$\|u_k\|_{L^\infty(\Omega)}\le 1$,
$|E_k|>(1-2^{-k})|B_{r_k/2}(x_k)|$, where
$B_{r_k/2}(x_k)\subset\Omega_\rho, r\le r_0$,
$\|u_k\|_{L^{\infty}(E)}\le \epsilon_k$, $\epsilon_k\in(0,1/2)$
and
\[
\|u_k\|_{L^{\infty}(B_{x_k}(r_k/2))}> \exp(-k|\log \epsilon_k|^{\alpha}).
\]

We consider $v_k(x)=u_k(x_k+rx)$ then $v_k$ satisfies an equation
of the form \beq \label{eq:inub}
\di(\tilde{g}v)=\tilde{V}v+\tilde{W}_1\cdot\nabla
v+\nabla\cdot(\tilde{W}_2 v)\quad {\rm{in}}\ \  B_1,\eeq where
$B_1$ is the unit ball with center at the origin. Moreover
$\tilde{g}$ satisfies (\ref{eq:00}) and (\ref{eq:02}) in $B_1$ and
\beq \label{eq:coefest} \|\tilde{V}\|_{L^{s/2}(B_1)},
\|\tilde{W}_1\|_{L^{s}(B_1)},\|\tilde{W}_2\|_{L^{s}(B_1)}\le
\Lambda_1. \eeq
Define $F_k=\{x: x_k+rx\in E_k\}\subset
B_{1/2}$, clearly $|F_k|>(1-2^{-k})|B_{1/2}|$. Let further
$F_0=\cap_{k\ge 2} F_k$, we have $|F_0|\ge \frac{1}{2}|B_{1/2}|$.
Assuming that Lemma \ref{l:N1} does not hold, we see that the
following statement should be false:

\begin{lemma}\label{l:N2}
Let $F_0$ be a subset of $B_{1/2}$ with
$|F_0|>\frac{1}{2}|B_{1/2}|$. There exist $c=c(F_0, \lambda,
\Lambda_0, \Lambda_1, s)$ such that if $u$ is a solution of
(\ref{eq:inub}), for which (\ref{eq:00}), (\ref{eq:02}), and
(\ref{eq:coefest}) holds,  $|u|\le 1$ in $B_1$, and $|u|\le
\epsilon$ on $F_0$, $\epsilon\in(0,1/2)$ then
\[
\|u\|_{L^\infty(B_{1/2})}\le \exp(-c|\log \epsilon|^{\alpha}).
\]where $\alpha<1$ and $c$ depend on the dimension of the space, $\Omega, \lambda, \Lambda_0,$ and $\Lambda_1$
\end{lemma}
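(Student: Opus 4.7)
The plan is to follow Nadirashvili's argument: convert the pointwise smallness $|u|\le\epsilon$ on $F_0$ into pointwise smallness on a small ball of scale $\tau=\tau(\epsilon)$, then propagate via the three-sphere inequality of Theorem~\ref{pr:3} (in $L^\infty$ form, which is available since $s>n$ in (\ref{eq:coefest}) gives an $L^2\to L^\infty$ elliptic estimate). Throughout, the interior H\"older continuity of $u$ with a universal exponent $\beta=\beta(\lambda,\Lambda_0,\Lambda_1,n,s)$, established by De~Giorgi--Moser under (\ref{eq:coefest}), plays the role of Landis's growth lemma in the original proof.

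\medskip

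First I would localize the smallness. By Egorov applied to the a.e.\ convergent densities $r\mapsto|F_0\cap B_r(x)|/|B_r(x)|$ on $F_0$, extract a compact $F_0'\subset F_0$ of positive measure and a modulus $\omega(r)\downarrow 0$ with $|F_0\cap B_r(x)|\ge(1-\omega(r))|B_r(x)|$ for all $x\in F_0'$ and $r\le r_0$. Fix $x_0\in F_0'$; the elementary observation that every $y\in B_{r/2}(x_0)$ lies within distance $C_n r\omega(r)^{1/n}$ of $F_0$, combined with H\"older continuity, gives
\[
|u(y)|\le\epsilon+C\bigl(r\omega(r)^{1/n}\bigr)^{\beta}\quad\text{on }B_{r/2}(x_0).
\]
Choosing $\tau(\epsilon)$ to balance the two terms yields $|u|\le 2\epsilon$ on $B_{\tau(\epsilon)/2}(x_0)$.

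\medskip

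Next I would chain the three-sphere inequality. Apply Theorem~\ref{pr:3} (upgraded to $L^\infty$) centered at $x_0$ with inner radius $\tau/2$, fixed intermediate $R$, and fixed outer $\rho<\rho_0$. Since the formula for $\alpha$ in Theorem~\ref{pr:3} gives $1-\alpha\asymp 1/\log(R/\tau)$, this produces
\[
\|u\|_{L^\infty(B_R(x_0))}\le C\exp\!\left(-\frac{c|\log\epsilon|}{\log(1/\tau(\epsilon))}\right).
\]
A finite chain of further three-sphere applications then propagates this bound across $B_{1/2}$, with the chain length depending only on universal data.

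\medskip

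The main obstacle is that the exponent $\mu<1$ must be structural (depending only on $\lambda,\Lambda_0,\Lambda_1,n,s$), yet Egorov's modulus $\omega$ intrinsically depends on $F_0$. A naive implementation of the two steps above contaminates $\mu$ with $\omega$. The Nadirashvili trick, adapted here, is to run the propagation \emph{iteratively} at dyadic scales rather than in one shot: combine Lemma~\ref{l:squares} with the doubling Proposition~\ref{pr:1} to lift pointwise smallness from dyadic cubes of rank $k+1$ to those of rank $k$, as in Section~4.3, with a controlled multiplicative loss $D$ at each step. The number of levels needed to reach a fixed cube in $B_{1/2}$ is polylogarithmic in $\epsilon$, and optimizing the level-by-level loss yields an exponent $\mu<1$ controlled by $H_0$ and $b_0$ from Theorem~I, absorbing the $F_0$-dependence into the multiplicative constant $c$ and the starting scale.
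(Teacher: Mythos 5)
There is a genuine gap, and it sits precisely at the point you flag as ``the main obstacle.'' Your first mechanism --- every point of $B_{r/2}(x_0)$ is within $C_n r\,\omega(r)^{1/n}$ of $F_0$, plus De~Giorgi--Moser H\"older continuity --- forces the balancing condition $\bigl(\tau\,\omega(\tau)^{1/n}\bigr)^{\beta}\lesssim\epsilon$, hence $\tau(\epsilon)\lesssim\epsilon^{1/\beta}$ up to the modulus $\omega$. Feeding a ball of radius polynomial in $\epsilon$ into the three-sphere inequality gives $1-\alpha\asymp 1/\log(1/\tau)\asymp \beta/|\log\epsilon|$ and therefore $\sigma^{1-\alpha}\approx\exp(-c\beta)$: a \emph{constant}, with no decay in $\epsilon$ whatsoever. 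The problem is not merely that $\omega$ contaminates the exponent; the H\"older route is quantitatively dead on arrival. To obtain $\exp(-c|\log\epsilon|^{\mu})$ with $\mu<1$ one needs smallness of order $\epsilon$ on a ball whose radius is $\exp(-|\log\epsilon|^{\theta})$ for some $\theta<1$ --- i.e.\ \emph{much larger} than any power of $\epsilon$ --- and neither H\"older continuity nor the density of $F_0$ alone can produce such a ball. Your proposed repair, iterating the dyadic-cube machinery of Section~4 (Lemma~\ref{l:squares} plus the doubling inequality), does not rescue this: that machinery is tied to the dichotomy on the $u$-dependent doubling constant $C(u,\Omega,\rho)$ and by design yields only the logarithmic rate $|\log\epsilon|^{-c}$ of Theorem~\ref{th:m1}, not the rate claimed in Lemma~\ref{l:N2}.

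The idea actually needed (and used in the paper) is different. One selects, via the Marcinkiewicz integral, a density point $x_0\in F_1\subset F_0$ for which $\int_{|y|\le1}\dist(x_0+y,F_1)|y|^{-n-1}\,dy<\infty$, so that $h(r)=\max_{|y|=r}\dist(x_0+y,F_1)$ satisfies $\int_0^{r_1}h(r)^n r^{-n-1}\,dr<\infty$. The $L^2\to L^\infty$ elliptic estimate then gives a multiplicative growth lemma: whenever $m(r)=\max_{|x-x_0|=r}|u|>2A\epsilon$, the maximum increases by a factor $(1+\beta)$ after moving outward by only $3h(r)$. Counting the ``good'' dyadic scales where $h_l<l^{-1/(n+1)}2^{-l}$ shows that reaching $m(r)\sim1$ from $m(r)\sim\epsilon$ requires $K\gtrsim 2^{N(n+2)/(n+1)}$ doubling steps inside $[2^{-2^{N+1}},2^{-2^{N}}]$, while $K\le a|\log\epsilon|$; hence $|u|\le 2A\epsilon$ on a ball of radius $r_0\ge\exp(-B|\log\epsilon|^{(n+1)/(n+2)})$. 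Only then does a single application of Theorem~\ref{pr:3}, with $1-\alpha\asymp1/\log(1/r_0)\asymp|\log\epsilon|^{-(n+1)/(n+2)}$, produce the bound $\exp(-c|\log\epsilon|^{1/(n+2)})$; all dependence on $F_0$ is absorbed into $B$ and hence into $c$, while the exponent is purely dimensional. This growth-lemma-plus-Marcinkiewicz step is absent from your proposal and cannot be replaced by the ingredients you list.
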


Thus the argument of this subsection shows that it is enough to prove Lemma \ref{l:N2} and get an estimate with $c$ that depends on $F_0$, Lemma \ref{l:N1} will follow. For the rest of the proof $F_0$ is a fixed subset of $B_{1/2}$.

\subsection{Elliptic estimate}

The following elliptic estimate holds
\begin{equation}\label{ellipticestimate}
\max_{y\in B_r(x)}|u(y)|^2\le
\frac{A^2}{|B_{2r}(x)|}\int_{B_{2r}(x)}u^2,\quad {\rm where}\
B_{2r}(x)\subset B_1,
\end{equation}
 where
$A$ depends only on $n,\lambda,\Lambda_0,\Lambda_1$, see for
example \cite[chapter III, \S 13]{LU}. The next result follows
from the elliptic estimate and will be used repeatedly in the
sequel.

\begin{claim}
Let $u$ be a solution to (\ref{eq:inub}) in $B_1$, $F\subset B_{1/2}$
 and
$|u|<\epsilon$ on $F$. There exist $\gamma\in(0,1)$ and
$\beta>0$, $\beta$ depends only on $\gamma$ and on $A$, such that:\\
$|u(y^*)|>c>2A\epsilon$, $|F\cap B_{r^*}(x^*)|>\gamma
|B_{r^*}(x^*)|$, $|x^*|<1-4r^*$ and $y^*\in B_{r^*/2}(x^*)$ imply
\[
\sup_{B_{r^*}(x^*)} |u|>(1+\beta)c.
\]
\end{claim}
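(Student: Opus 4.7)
The plan is to argue by contradiction using only the elliptic estimate \eqref{ellipticestimate} and a simple splitting of an integral. Assume, toward a contradiction, that $\sup_{B_{r^*}(x^*)}|u|\le(1+\beta)c$.

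First I would apply \eqref{ellipticestimate} with centre $x^*$ and radius $r^*/2$. The requirement $B_{r^*}(x^*)\subset B_1$ that this imposes is guaranteed by $|x^*|<1-4r^*$. Since $y^*\in B_{r^*/2}(x^*)$ and $|u(y^*)|>c$, one obtains
\[
c^2<|u(y^*)|^2\le\frac{A^2}{|B_{r^*}(x^*)|}\int_{B_{r^*}(x^*)}|u|^2.
\]

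Next I would split this integral according to whether the integration variable lies in $F$ or not. On $F\cap B_{r^*}(x^*)$ the bound $|u|<\epsilon$ combined with the hypothesis $c>2A\epsilon$ yields $A^2\epsilon^2<\tfrac14 c^2$, so the $F$-part contributes less than $\tfrac14 c^2$. On $B_{r^*}(x^*)\setminus F$, whose measure is at most $(1-\gamma)|B_{r^*}(x^*)|$, the contradictory assumption gives $|u|\le(1+\beta)c$. Substituting the two estimates produces
\[
c^2 \ <\ \tfrac14 c^2+A^2(1-\gamma)(1+\beta)^2 c^2,
\]
i.e.\ $(1+\beta)^2>\dfrac{3}{4A^2(1-\gamma)}$.

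Finally, I would close the argument by first choosing $\gamma\in(0,1)$ close enough to $1$ that $1-\gamma<\dfrac{3}{4A^2}$ (which makes the right-hand side strictly greater than $1$), and then picking $\beta>0$, depending only on $\gamma$ and $A$, with $(1+\beta)^2<\dfrac{3}{4A^2(1-\gamma)}$. Such a $\beta$ exists and contradicts the displayed inequality, proving the claim. No step is substantially hard; the only points requiring care are selecting a ball whose doubling sits inside $B_1$ (hence the condition $|x^*|<1-4r^*$) and checking that the final choice of $\gamma$ and $\beta$ is independent of $u$, $x^*$, $y^*$, $r^*$, $F$ and $\epsilon$.
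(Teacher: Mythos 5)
Your proof is correct and follows essentially the same route as the paper's: apply the elliptic estimate \eqref{ellipticestimate} on $B_{r^*/2}(x^*)$, split the integral over $F\cap B_{r^*}(x^*)$ and its complement, and use $c>2A\epsilon$ together with $\gamma$ close to $1$; the paper just runs the argument directly (bounding $d=\sup_{B_{r^*}(x^*)}|u|$ from below) rather than by contradiction. No substantive difference.
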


\begin{proof}
Denote $\max_{y\in B_{r^*}(x^*)}|u(y)|=d$, inequality
(\ref{ellipticestimate}) for $B_{r^*/2}(x^*)$ gives
\[
c^2<\max_{B_{x^*}(r^*/2)}|u|^2\le
\frac{A^2}{|B_{r^*}(x^*)|}\int_{B_{r^*}(x^*)}u^2\le
A^2(\epsilon^2\gamma+d^2(1-\gamma)).\] Then
\[
d^2>\frac{c^2-A^2\epsilon^2\gamma}{(1-\gamma)A^2}\ge\frac{c^2}{2(1-\gamma)A^2}.\]
If $\gamma$ is close to $1$, $\gamma=\gamma(A)$, then the claim is
justified.
 \end{proof}

\subsection{Points of density and Marcinkiewicz integral}

Let $\gamma$ be from the claim above, $\gamma\in(0,1)$. Since almost all the points of $F_0$ are
points of density, there exist a positive number $r_1$, $r_1$ depends on $F_0$, and a set
$F_1\subset F_0$ such that $|F_1|>|F_0|/2$ and for each $x\in F_1$ we have
\[
\frac{|F_0\cap B_r(x)|}{|B_r(x)|}>\gamma {\rm \ whenever\ } r\le
r_1.
\]
Now, $F_1$ has positive measure and by the
Marcinkiewicz theorem (see for example, \cite[chapter I]{S} )
for almost each point of $x$ of $F_1$ we
have
\begin{equation}\label{Marcinkiewicz}
\int_{|y|\le 1}\frac {\dist(x+y, F_1)}{|y|^{n+1}}<+\infty.
\end{equation}
We fix a point $x_0$ in $F_1$ for which (\ref{Marcinkiewicz})
holds.

For each $r\in (0, r_1)$ let $h(r)=\max_{|y|=r}\dist(x_0+y, F_1)$.
Our choice of $x_0$ implies that
\begin{equation}\label{h-Mar}
\int_0^{r_1}\frac{(h(r))^n}{r^{n+1}}dr<+\infty.
\end{equation}
Indeed, let us  check that (\ref{Marcinkiewicz}) implies (\ref{h-Mar}).
Let $\overline{y}$ be such that $|\overline{y}|=r$ and
$\dist(x_0+\overline{y}, F_1)=h(r)$. We have $\dist(x_0+z,F_1)\ge h(r)/2$ for all $z$ such that
$|x_0-z|=r$ and $|\overline{y}-z|<h(r)/2$. Then
\[
\int_{\s^n}\dist (x_0+ry', F_1)dy' \ge C
h(r)\left(\frac{h(r)}{r}\right)^{n-1}.\] Where
$\mathbf{S}^{n}=\left\{ y^{\prime }\in \mathbf{R}^{n}:\left\vert
y^{\prime }\right\vert =1\right\} $.
Finally, polar integration gives (\ref{h-Mar}).

Let further $h_l=\max_{r\in(2^{-l-1},2^{-l})}h(r)=h(r_l)$. We note that
$h(r_l+t)\ge h(r_l)-|t|>h_l/2$ when $|t|<h_l/2$. Then (\ref{h-Mar}) implies that
\begin{equation}
\label{hdiadic}
+\infty>\sum_{l>l_0}\int_{2^{-l-1}}^{2^{-l}}\frac{(h(r))^n}{r^{n+1}}\ge
\sum_{l>l_0} (h_l/2)^{n}2^{(n+1)l}h_l/2.
\end{equation}
The following property holds:\\
 for any $x$ such that
$|x-x_0|<r_1/2$ there is a ball $B_{r(x)}(s(x))$ such that $x\in
B_{r(x)/2}(s(x))$, \[r(x)\le 2h(|x-x_0|)\] and
\begin{equation}\label{elldouble}
\frac{|F_0\cap B_{r(x)}(s(x))|}{|B_{r(x)}(s(x))|}>\gamma.
\end{equation}
Indeed, we
just take $s(x)\in \overline{B_{h(|x-x_0|)}(x)}\cap F_1$ and
$r(x)=2|x-s(x)|$.

\subsection{Growth properties of $u$}
Let $r<r_1$ and $m(r)=\max_{|x-x_0|=r} |u(x)|$. Assume that
$m(r)>2A\epsilon$ and let $x, |x-x_0|=r,$ be such that
$m(r)=|u(x)|$, further let $s(x)$ and $r(x)$ be as in
(\ref{elldouble}). We apply the Claim to the ball $B_{r(x)}(s(x))$
and get
\[
\sup _{B_{r(x)}(s(x))}|u|>(1+\beta)m(r).\] We have also
$|s(x)-x_0|\le |x-x_0|+|x-s(x)|\le r+r(x)/2\le r+h(r)$ and
\[
\max_{t\in[-3h(r),3h(r)]}m(r+t)>(1+\beta)m(r).
\]
Let us define
\[r(M):=\min\{r:m(r)\ge M\}\]
for $M\le sup_{|x-x_0|\le r_1}|u(x)|=M_1$. For any $M>2A\epsilon$ we have either
\[(1+\beta)M>M_1,\quad  {\rm and}\quad r(M)>r_1/2\]
 (see inequality for $h(r)$ below) or
\begin{equation}
\label{rm}
r((1+\beta)M)\le r(M)+3h(r(M)).
\end{equation}

We remark that (\ref{hdiadic}) implies that $\lim_{l\rightarrow\infty}h_l2^l=0$ and there exists $l_1=l_1(F_0,x_0)$ such that $h_l2^{l}<1/12$  when $l>l_1$. Consequently,
$h(r)<r/6$ for $r<2^{-l_1}$.

We say
that $l$ is good (and the corresponding interval
$(2^{-l-1},2^{-l})$ is good) if $h_l<l^{-1/(n+1)}2^{-l}$. Then
(\ref{hdiadic}) implies that there exists $N_0$, $N_0=N_0(F_0,x_0)$, such that for
$N\ge N_0$ at least $2^{N-1}$ of the numbers $2^{N}+1$,...,$2^{N+1}$
are good.

Assume that  $r_0=r(2A\epsilon)$ we want to prove that
\beq
\label{eq:r0}
r_0\ge \exp(-B|\log\epsilon|^{(n+1)/(n+2)}),\eeq
where $B$ depends on $F_0, x_0, A$.
We assume also that $r_0\in(2^{-l_0-1},2^{-l_0})$, where $l_0>l_1$ and $l_0\in(2^{N+1},2^{N+2}),$ $N\ge N_0$. (Otherwise (\ref{eq:r0}) is satisfied provided that $\epsilon<1/2$ and $B$ is large enough.)

Further let
$r_j=r((1+\beta)^j2A\epsilon)$, when $(1+\beta)^{j}2A\epsilon<M_1$. Then (\ref{rm}) implies
$r_{j+1}\le r_j+3h(r_j).$ The sequence $\{r_j\}$ is increasing and
 $r_{j+1}-r_j<3h(r_j)<r_j/2;$ moreover  if $r_j\in(2^{-l-1},2^{-l}]$,
where $l$ is good then
\beq
\label{eq:lgood}
r_{j+1}-r_j<3h_l<2^{-l+2}l^{-1/(n+1)}.\eeq
Let $K$ be the number of $j$ such that $r_j<2^{-2^N}$. For each good $l$, $2^N<l\le 2^{N+1}$,
there exists $j_l=\min\{j: r_j\in(2^{-l-1}, 2^{-l}]\}$.
Thus $r_{j_l-1}<2^{-l-1}$ and $r_{j_l}<\frac{3}{2}r_{j_l-1}<3\cdot2^{-l-2}$.
Then (\ref{eq:lgood}) implies that there are at least $\frac{1}{4}l^{1/(n+1)}$ elements of the sequence $\{r_j\}$ in $(2^{-l-1}, 2^{-l})$.
Now, since there are at least $2^{N-1}$ good numbers $l\in\{2^N+1,...,2^{N+1}\}$, we have
\[K\ge  2^{N-1}\cdot\frac 14 2^{N/(n+1)}=\frac 1 8  2^{N(n+2)/(n+1)}.\]

From the other hand $m(r_K)\ge (1+\beta)^K2A\epsilon$ and $m(r_K)\le1$. We
get the following inequality:
\[
2A\epsilon(1+\beta)^K\epsilon\le 1.\] It implies
$
K\le a|\log \epsilon|,$
where $a$ depends on $A$ and on $\beta$. We combine the last inequality with the estimate
we have for $K$ from below and obtain
\[
8a|\log\epsilon|\ge  2^{N\frac{n+2}{n+1}}.
\]
Now,
\[r_0\ge 2^{-2^N}=\exp(-2^N\log 2)\ge \exp(-B|\log\epsilon|^{(n+1)/(n+2)}).\]
Inequality (\ref{eq:r0}) is established.

We have
\[
\max_{B_{r_0}(x_0)} |u |\le 2A\epsilon.\] Now we apply Theorem \ref{pr:3}  and obtain
\[
\|u\|_{L^2(B_{\kappa/2}(x_0))}\le C\exp(-B_1|\log\epsilon|^{1/n+2}).\]
Finally, using standard technique, we complete the proof of Lemma
\ref{l:N2}. We note that $\alpha$ depends on $n$ and $\kappa$ from Proposition \ref{pr:1}.

\section*{Acknowledgements}
We are very grateful to Professor   H.Koch for useful comments and suggestions that improved the presentation and results of the current manuscript.

The work on this article started when the first author visited Universit\`{a}
degli Studi, Firenze,  and it's a pleasure to thank the University for its hospitality and support.
The first author is partly supported by the Research
Council of Norway, grants 160192/V30 and  177355/V30.


\end{document}